\documentclass[a4paper,leqno]{article}
\usepackage[a4paper,centering,vscale=0.77,hscale=0.77]{geometry}\usepackage{amsmath,amsthm,amssymb,bbm,bm}
\usepackage{xcolor}
\usepackage{verbatim}
\usepackage{todonotes}
\usepackage{mathrsfs}

\usepackage{thmtools}
\usepackage[pdfdisplaydoctitle,colorlinks,breaklinks,urlcolor=blue,linkcolor=blue,citecolor=blue]{hyperref}
\usepackage{enumitem}

\newtheorem{theorem}{Theorem}[section]
\newtheorem{lemma}[theorem]{Lemma}
\newtheorem{proposition}[theorem]{Proposition}

\newtheorem{definition}[theorem]{Definition}
\newtheorem{remark}[theorem]{Remark}

\newcommand{\E}{\mathbb E}

\begin{document}

\title{Long time behavior of the stochastic 2D Navier-Stokes equations }
\author{Benedetta Ferrario\thanks{Dipartimento di Scienze Economiche e Aziendali, Universit\`a di Pavia, 27100 Pavia, Italy.
E-mail: \texttt{benedetta.ferrario@unipv.it}}
\and Margherita Zanella\thanks{Department of Mathematics, Politecnico di Milano,
Via E.~Bonardi 9, 20133 Milano, Italy.
E-mail: \texttt{margherita.zanella@polimi.it}}}

\maketitle

\begin{abstract}
We  review some  basic results on existence and  uniqueness of the invariant measure for the two-dimensional stochastic Navier-Stokes equations. A large part of the literature concerns the additive noise case; after revising these models, 
we consider our recent result \cite{FerZanSNS} with a  multiplicative noise.
\end{abstract}

\noindent
{\textbf{Keywords:} Two dimensional stochastic Navier-Stokes equations,  invariant measure,  multiplicative noise, degenerate noise.}
\\
{\bf MSC}: 
35Q30,  
35R60, 
60H30, 
60H15. 

\tableofcontents

\section{Introduction}

The Navier-Stokes equations 
describe the motion of  homogeneous incompressible viscous fluids; they are
\begin{equation}\label{eq-fluid}
\partial_t u -  \nu \Delta u +(u \cdot \nabla)u +\nabla p =f ; 
\qquad \mbox{div }u =0 
\end{equation}
where $u=u(t,\xi)$ and $p=p(t,\xi)$ are the 
velocity vector  and the (scalar) pressure, respectively, defined for 
$t\ge 0$ and $\xi \in \mathcal{O}\subseteq \mathbb R^d$ ($d=2$ or $d=3$);
 $\nu>0$ is the kinematic viscosity parameter.
Suitable initial and boundary conditions are given.
In the right-hand-side, the forcing term can be deterministic  and/or  stochastic.

In this paper we review some results for the bidimensional Navier-Stokes equations with both  forcing terms, deterministic and stochastic.
The literature is quite huge and we cannot quote all the contributions. Our purpose is to present and compare some results obtained in the last thirty years on the stationary solutions, called invariant measures in this setting. 
In particular, the results for the uniqueness of the invariant measure. 
Under some assumptions on the  forcing terms,  we will show that 
there exists a unique invariant measure even when the deterministic Navier-Stokes equations \eqref{eq-fluid}
have more than one stationary solution. 

In the deterministic setting, uniqueness of the stationary solutions  occurs  when the data are small enough, or the viscosity is large enough (see \cite{Temam2001} Chapter 2).
Roughly speaking, a strong enough dissipation prevents the existence of different stationary solutions. 
Moreover, when the stationary solution is unique then the system converges to it as time diverges to $+\infty$. 

We point out that  the behavior of the motion of fluids with small viscosity is a interesting and challenging problem,   also in connection with the vanishing viscosity limit leading to the equation of motion of inviscid fluids (i.e., the Euler equations).

The idea proposed by  Kolmogorov (see \cite{Frisch,Gall,VisFurk} and reference therein)  in his K41 theory of turbulence 
is to introduce a noisy forcing term in the equation of motion so to mix up the dynamics and break the possibility of different asymptoptic behaviors. 
In this setting a statistical description  is given, as usually done when describing the chaotic behaviour of the so called turbulent flows. 
By means of an invariant (stationary) probability measure, when it exists and is unique, one can  describe the asymptotic behavior of a turbulent fluid from a statistical point of view.
Therefore the stochastic Navier-Stokes equations are an important model 
in the turbulence theory of fluids. 
 
Starting from this idea, many researchers have studied the Navier-Stokes equations with a stochastic forcing term. 
In this paper we revise some contributions for the Navier-Stokes equations in a smooth bounded domain of $\mathbb R^2$, where in the right-hand-side of the equation of motion \eqref{eq-fluid} we introduce a noisy forcing term $G(u){\rm d}W$. 
It represents a noise which is white in time and colored in space; moreover it might depend  on the velocity (the so called multiplicative noise) or not (the so called additive noise).  
We will focus mainly on our recent result, when the noise  is multiplicative  and of at most linear growth. 
But we will revise also the main results for the case of additive noise, as well.  In  the literature there are also  different models with a  random force   acting at discrete times (see, e.g., \cite{BKL01,KS_BOOK}).

As far as the structure of the paper is concerned, in Section \ref{setting_sec} we introduce the notations and   assumptions. 
In Section  \ref{S:wellpos} sufficient conditions for the existence of a unique solution to the Navier-Stokes equations are given.
The existence of  invariant measures is considered in  Section  \ref{S:ex-inv-meas}, the uniqueness  in Section \ref{S:unique-meas} and the asymptotic stability in Section \ref{sec_asy_sta}.

\section{Mathematical setting}
\label{setting_sec}
 In this section we fix the notations, explain the assumptions, formulate the framework of our problem and state the main results.
We refer mainly to the book \cite{Temam2001} by Temam for the deterministic Navier-equations and to the books \cite{DapZab2,DapZab1} by Da Prato and Zabczyk for the SPDE's.

Let $\mathcal{O}$ be a smooth bounded domain in       $\mathbb R^2$.
We define
\[
{\cal V} = \{ u=(u_1,u_2) \in [C_{0}^{\infty}(\mathcal{O})]^{2}: \mbox{div }u = 0 \}
\]
and take the closure of this space in $[L^{2}({\cal
O})]^{2}$ and $[H_{0}^{1}({\mathcal{O}})]^{2}$; we obtain, respectively, the spaces 

\[
H=\{ u \in [L^{2}({\mathcal{O}})]^{2}: 
        \mbox{div }u=0, u \cdot n =0 \mbox{ in }\partial   {\mathcal{O}} \}
\]
and 
\[
V=\{u \in [H^{1}_{0}({\mathcal{O}})]^{2}: \mbox{div }u=0\},
\]
where by $n$ we denote the outer normal to $\partial {\mathcal{O}}$.
These are Hilbert spaces; we equip $H$ with the scalar product in $L^2$ involving  the velocity
and $V$ with scalar product  in $L^2$ involving the gradient of the velocity
\[
(u,v)_H=\sum_{i=1}^2 \int_{\cal O}u_i(\xi) v_i(\xi){\rm d}\xi,\qquad 
(u,v)_V=\sum_{i=1}^2 \int_{\cal O} \nabla u_i(\xi)\cdot \nabla v_i(\xi){\rm d}\xi.
\]
We consider the orthogonal projector $\Pi$ from  $[L^{2}({\mathcal{O}})]^{2}$ onto $H$ and the Stokes operator $A$ is defined as 
\[
Au=-\Pi \Delta u, \qquad \forall u \in 
\mathcal{D}(A)=[H^{2}({\mathcal{O}})]^{2}\cap V.
\]
We can define the fractional powers $A^{\alpha}$, for $\alpha \in \mathbb R$, and the 
space $\mathcal{D}(A^{\alpha})$ that corresponds to the Sobolev space
$[H^{2\alpha}({\cal O})]^{2}$ with the suitable boundary condition and the divergence free condition.
In particular, $V={\mathcal D}(A^{\frac 12})$ and for $\alpha>\beta$
the continuous embedding ${\cal D}(A^\alpha)\subset {\cal D}(A^\beta)$ is compact too.
Denoting by $H'$ and $V'$ the dual spaces, if we identify $H$ with
$H'$ we get the following continuous embeddings 
\[
V \subset H \subset V'
\]
and  $V^\prime={\mathcal D}(A^{-\frac 12})$.

The operator $A$ is a closed positive unbounded self-adjoint operator in $H$ with
the inverse 
$A^{-1}$ which is a 
self-adjoint compact operator in $H$. By the classical
spectral theorems there exists 
a sequence $\{\lambda_{n}\}_{n=1}^{\infty}$ 
of eigenvalues of the Stokes operator  
\[
0<\lambda_{1} \le \lambda_{2} \le \ldots, \qquad  \lim_{n \rightarrow +\infty}\lambda_{n}=+ \infty,
\]
corresponding to the eigenvectors $\{e_n\}_{n\in \mathbb{N}} \in \mathcal{D}(A)$, which form an orthonormal
basis in $H$. We will denote by $P_N$ the orthogonal projection in $H$ onto the space Span$\{e_n\}_{1\le n\le N}$.

The Poincar\'e inequality holds true, since we consider a 
bounded domain ${\cal O}$ and homogeneous Dirichlet  boundary conditions; we write it as 
\begin{equation*}
\label{lambda_1}\|u\|^2_V \ge \lambda_1 \|u\|^2_H.\end{equation*}

Now, consider the bilinear operator $B$ 
from $V \times V$ into $V'$ defined as
\[
<B(u,v),z> \:= \int_{\cal O} z(\xi) \cdot \left(u(\xi) \cdot \nabla \right)
   v(\xi) \; d\xi \qquad \forall \: u,v,z \in V
   \]
where $<\cdot, \cdot>$ denotes  the duality pairing
between $V$ and $V^\prime$.

By incompressibility condition we have
\begin{equation}\label{Buvv=0}
<B(u,v),v>\:=0
\end{equation}
\begin{equation}
<B(u,v),z>\:=-<B(u,z),v>.
\end{equation}
A result of \cite{giga}, (Lemma 2.2), allows to extend the definition of the bilinear operator to more general spaces. We  quote it  here for the particular two dimensional case.
\begin{lemma} \label{gig}
Let $0 \le \delta < 1$. Then there exists 
some constant $C=C(\rho, \theta, \delta)$ such that
\[ \|B(u,v)\|_{{\cal D}(A^{-\delta} )} \le C \:
\|u\|_{{\cal D}(A^\theta )} \; \|v\|_{{\cal D}(A^\rho )} \hspace*{2cm} \forall u \in {\cal D}(A^{\theta}), 
v \in {\cal D}(A^{\rho})
\]
where  $\theta >0,\: \rho >0,\: 
\delta + \theta + \rho  \ge 1,\: \delta + \rho > {\frac 12}$.
\end{lemma}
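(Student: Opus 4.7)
By duality, the bound
$\|B(u,v)\|_{\mathcal{D}(A^{-\delta})} \le C \|u\|_{\mathcal{D}(A^\theta)}\|v\|_{\mathcal{D}(A^\rho)}$
is equivalent to the trilinear estimate
\[
|\langle B(u,v), z\rangle| \le C \|u\|_{\mathcal{D}(A^\theta)}\|v\|_{\mathcal{D}(A^\rho)}\|z\|_{\mathcal{D}(A^\delta)}
\quad\text{for every }z\in\mathcal{D}(A^\delta).
\]
Since $\langle B(u,v),z\rangle=\int_{\mathcal{O}}(u\cdot\nabla)v\cdot z\,d\xi$, I would estimate the integrand by H\"older's inequality with a triple $(p,q,r)$ satisfying $\tfrac1p+\tfrac1q+\tfrac1r=1$, and then bound each factor via the $2$D Sobolev embedding $\mathcal{D}(A^s)\hookrightarrow L^{2/(1-2s)}$ for $0\le s<1/2$ (with the obvious $L^\infty$ endpoint when $s>1/2$).

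The natural choice is to pay $\theta$ derivatives on $u$ (giving $u\in L^p$ with $\tfrac1p=\tfrac12-\theta$), $\delta$ derivatives on $z$ (giving $z\in L^r$ with $\tfrac1r=\tfrac12-\delta$), and to note that $\nabla v\in \mathcal{D}(A^{\rho-1/2})\hookrightarrow L^q$ with $\tfrac1q=1-\rho$. The admissibility $\tfrac1p+\tfrac1q+\tfrac1r=1$ then reads $(1/2-\theta)+(1-\rho)+(1/2-\delta)\le 1$, which is exactly $\theta+\rho+\delta\ge 1$. Positivity of the three reciprocals demands $\theta<1/2$, $\delta<1/2$ and $\rho>1/2$; the more relaxed hypothesis $\delta+\rho>1/2$ in the lemma is obtained by first integrating by parts through $\langle B(u,v),z\rangle=-\langle B(u,z),v\rangle$, which amounts to interchanging the roles of $v$ and $z$ and covers the regime in which $\delta\ge 1/2$ while $\rho$ is small. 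In the opposite regime $\theta\ge 1/2$, the $L^p$ bound on $u$ is replaced by $\|u\|_{L^\infty}\lesssim\|u\|_{\mathcal{D}(A^\theta)}$, which then leaves more room for the other two H\"older exponents.

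The main technical point is handling the borderline Sobolev exponents, where $s=1/2$ in dimension two fails to give $L^\infty$ and only yields $L^p$ for every finite $p$. To circumvent this I would shift $s$ to $s-\varepsilon$, use $\mathcal{D}(A^{s-\varepsilon})\hookrightarrow L^{1/\varepsilon}$, and absorb the $\varepsilon$-loss into the slack of the strict inequality $\delta+\rho>1/2$ (and of $\theta>0$). A cleaner alternative is to obtain the endpoint bound by real interpolation between two non-critical estimates of the same trilinear form, which avoids any direct appeal to a limiting embedding.

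The key difficulty, and the only step that is not bookkeeping, is verifying that \emph{at least one} admissible triple $(p,q,r)$ exists in each of the sub-regimes determined by which of $\theta$, $\rho$, $\delta$ saturates the $1/2$ threshold. A case analysis on these three sub-regimes, combined with the two possible placements of the derivative (on $v$ via the direct pairing, or on $z$ via \eqref{Buvv=0} and the antisymmetry identity), covers the full range of parameters prescribed in the statement; the constant $C$ then depends on the Sobolev constants and hence on $\rho,\theta,\delta$ as asserted.
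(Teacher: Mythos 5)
First, a point of comparison: the paper does not prove Lemma \ref{gig} at all --- it is quoted verbatim from \cite{giga} (Lemma 2.2) --- so there is no internal proof to measure you against. Your strategy (duality, H\"older with a triple of exponents, the two-dimensional Sobolev embeddings of $\mathcal{D}(A^s)$, and the antisymmetry $\langle B(u,v),z\rangle=-\langle B(u,z),v\rangle$ to relocate the derivative) is the standard route and is essentially the one followed in \cite{giga}; in particular it does cover the only instance the paper actually uses, namely \eqref{stima-B-L4} with $(\theta,\rho,\delta)=(\tfrac14,\tfrac14,\tfrac12)$.

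However, your case analysis does not cover the full range of parameters in the statement. In both of your arrangements exactly one factor carries a full gradient: $\nabla v\in H^{2\rho-1}$ in the direct pairing, $\nabla z\in H^{2\delta-1}$ after the integration by parts. For that factor to be placed in some $L^q$ its Sobolev index must be nonnegative (an embedding can trade derivatives for integrability, but cannot create derivatives), so the direct pairing requires $\rho\ge\tfrac12$ and the swapped one requires $\delta\ge\tfrac12$. The hypothesis of the lemma is only $\delta+\rho>\tfrac12$, which admits, e.g., $\rho=\delta=\tfrac38$, $\theta=\tfrac14$; there both $\nabla v$ and $\nabla z$ have strictly negative regularity, no admissible H\"older triple exists in either arrangement, and the $\varepsilon$-shift does not help because the obstruction is a sign, not a borderline exponent. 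To close this regime you need a genuinely bilinear ingredient: either write $B(u,v)=\Pi\,\mathrm{div}(u\otimes v)$ and invoke the fractional product estimate $H^{2\theta}\cdot H^{2\rho}\subset H^{2\theta+2\rho-1}$ (Kato--Ponce or paraproducts), or interpolate the trilinear form between a point with $\rho\ge\tfrac12$ and a point with $\delta\ge\tfrac12$ on the constraint surface. You do mention interpolation, but only as a device for the $s=\tfrac12$ endpoint of the embedding, not for this structurally different sub-regime; as written, the claim that the two placements of the derivative ``cover the full range of parameters'' is false. (A small related imprecision: positivity of $1/q=1-\rho$ only forces $\rho<1$; the constraint $\rho\ge\tfrac12$ comes from the sign of the regularity of $\nabla v$, which is exactly where the gap hides.)
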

In particular: $B: {\cal D}(A^{\frac 14})\times {\cal D}(A^{\frac 14})\to {\cal D}(A^{-\frac 12})$ is well defined and there exists a positive constant $C$ such that
\begin{equation}\label{stima-B-L4}
\|B(u,v)\|_{V^\prime}\le C\|u\|_{{\cal D}(A^{\frac 14})} \|v\|_{{\cal D}(A^{\frac 14})}.\qquad \forall u,v \in {\cal D}(A^{\frac 14}).
\end{equation}

Projecting equation \eqref{eq-fluid} onto $H$, we get rid of the pressure term and we obtain  the following
abstract formulation
\begin{equation} \label{sns} \left\{
        \begin{array}{l}
        {\rm d}u(t) +\left[\nu Au(t)+B(u(t),u(t))\: \right] \; {\rm d}t=f (t)\; {\rm d}t + G(u(t)) {\rm d}W(t)
        \\
        u(0)=u_{0}.
        \end{array} \right.
\end{equation}

This problem is studied for any initial velocity $u_0\in H$ and deterministic forcing term  $f\in L^2(0,T;V^\prime)$; later one we will ask $f$ to be independent of time. 

Now we define the stochastic forcing term.
Given the Hilbert spaces $H$ and $K$, in what follows we denote by  $L(H,K)$ the space of linear operators from $H$ to $K$  and with $L_{HS}(H,K)$  the space of Hilbert-Schmidt operators from $H$ to $K$. 
Given a separable real Hilbert space $U$ with  an orthonormal basis  $(f_n)_{n\in\mathbb{N}}$, we assume that $\{W(t)\}_{t\ge 0}$ is a $U$-cylindrical Wiener process defined on a probability space $(\Omega, {\cal F}, \mathbb P)$; we consider the natural filtration  ${\cal F}_{t}$ fulfilling the standard conditions, i.e. it is  continuous  and complete.  The Wiener process can be represented as 
\begin{equation}\label{def-serie-W}
W(t)=\sum_{n=1}^\infty \beta_n(t)f_n
\end{equation}
for a sequence of independent real valued standard Wiener processes $(\beta_n)_n$.

The multiplicative term $G(u)$ can reduce to a constant  operator (the so called additive noise)  or in general it depends on the unknown $u$ (the so called multiplicative noise). We assume the following.

\begin{enumerate}[label=\textbf{(A\arabic*)}] \itemsep0.3em
\item \label{noise1} The operator 
$G:H\to L_{HS}(U;H)$
satisfies
\begin{equation*}
\|G(u)\|_{L_{HS}(U,H)}^2\le C_1 \|u\|_H^2+C_2, \qquad u \in H, 
\end{equation*}
for some non-negative constants $C_1$ and $C_2$.
\end{enumerate}
Later on we will specify more assumptions on $G$.

\subsection{Solutions}

In a two dimensional domain, the Navier-Stokes equations are well studied as far as existence, uniqueness and regularity are concerned.

The following definition of martingale solution is given.
\begin{definition}
\label{mart_sol_def}
We say that there exists a martingale solution of the Navier-Stokes equation
\eqref{sns} on the interval $[0,T]$ and with initial velocity $u_0\in H$ if there exist
a stochastic basis $(\widetilde \Omega, \widetilde{\mathcal{F}}, \widetilde{\mathbb{P}},\widetilde{\mathbb{F}})$, a $U$-cylindrical Wiener process $\widetilde W$,
and a progressively measurable process $u:[0,T]\times \widetilde \Omega \rightarrow H$ with $\widetilde{\mathbb{P}}$-a.e. path
\begin{equation}\label{reg-u-in-def}
u \in C([0,T];H) \cap L^2(0,T;V)
\end{equation}
such that $\widetilde{\mathbb{P}}$-a.s., the identity 
\begin{multline*}
(u(t), \psi)_H+\nu \int_0^t (A^{\frac 12}u(s), A^{\frac 12}\psi)_H\, {\rm d}s 
+\int_0^t \langle B(u(s),u(s)),\psi \rangle\, {\rm d}s 
\\=( u_0, \psi)_H + \int_0^t\langle f(s), \psi\rangle {\rm d}s + \langle \int_0^t G(u(s))\, {\rm d}\widetilde{W}(s), \psi\rangle
\end{multline*}
holds true for any $t \in [0,T]$, $\psi \in V$.
\end{definition}

The latter identity is equivalent to say that equation \eqref{sns} holds in the space $V^\prime$.

It is possible  to fix the stochastic basis and in this case  we speak of a strong solution (in the probabilistic sense). This is the definition.
\begin{definition}
\label{strong_sol_def}
Given a stochastic basis $(\Omega, \mathcal{F}, \mathbb{P}, \mathbb{F})$ and  a $U$-cylindrical Wiener process $W$, a strong solution of  the Navier-Stokes equation \eqref{sns} on the interval $[0,T]$ 
with initial velocity $u_0\in H$ is an $H$-valued continuous $\mathbb{F}$-adapted process $u$ with $\mathbb{P}$-a.e. path in $L^2(0,T;V)$
such that ${\mathbb{P}}$-a.s., the identity 
\begin{multline*}
(u(t), \psi)_H+\nu \int_0^t (A^{\frac 12}u(s), A^{\frac 12}\psi)_H\, {\rm d}s 
+\int_0^t \langle B(u(s),u(s)),\psi \rangle\, {\rm d}s 
\\
=( u_0, \psi)_H + \int_0^t\langle f(s), \psi\rangle {\rm d}s + \langle \int_0^t G(u(s))\, {\rm d}W(s), \psi\rangle
\end{multline*}
holds true for any $t \in [0,T]$, $\psi \in V$.
\end{definition}

\section{Well posedness}
\label{S:wellpos}

We have the following existence result of martingale solutions as given in Definition \ref{mart_sol_def} (cf. \cite{FlaGat95}).
\begin{proposition}\label{teo-esistenza-sol-mart}
Let $T$ be any  finite time. If   $f\in L^2(0,T;V^\prime)$ and $G$ fulfills Assumption \ref{noise1}, 
then for any 
 $u_0\in H$ there exists a martingale solution of the Navier-Stokes equation \eqref{sns} on the interval $[0,T]$.
\end{proposition}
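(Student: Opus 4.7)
The plan is the classical Galerkin–Skorokhod scheme à la Flandoli–Gątarek. First, I would introduce the finite-dimensional approximations $u^N$ living in $\mathrm{Span}\{e_1,\ldots,e_N\}$ and solving
\begin{equation*}
  \mathrm{d}u^N + \bigl[\nu A u^N + P_N B(u^N,u^N)\bigr]\,\mathrm{d}t = P_N f\,\mathrm{d}t + P_N G(u^N)\,\mathrm{d}W,\qquad u^N(0)=P_N u_0.
\end{equation*}
Since $P_N B$ is locally Lipschitz on $P_N H$ and $P_N G$ is of linear growth by \ref{noise1}, this is a well-posed finite-dimensional SDE; local existence is standard and, combined with the a priori bound below, gives a global strong solution.

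Next I would derive energy estimates uniform in $N$. Applying Itô's formula to $\|u^N(t)\|_H^2$, using the antisymmetry $\langle B(u^N,u^N),u^N\rangle=0$ from \eqref{Buvv=0}, Young's inequality on $\langle f,u^N\rangle$, and \ref{noise1} on the quadratic variation term, one obtains after taking expectations and applying Gronwall
\begin{equation*}
  \sup_N \mathbb{E}\Bigl[\sup_{t\in[0,T]}\|u^N(t)\|_H^2 + \nu\int_0^T \|u^N(s)\|_V^2\,\mathrm{d}s\Bigr] \le C(T,\nu,u_0,f,C_1,C_2),
\end{equation*}
where the supremum inside the expectation is controlled by the Burkholder–Davis–Gundy inequality on the martingale part.

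The third step is to prove tightness of the laws of $u^N$ on a suitable path space such as $L^2(0,T;H)\cap C([0,T];\mathcal{D}(A^{-\alpha}))$ for some $\alpha>0$. The spatial bound above provides compactness in space; for compactness in time I would extract fractional Sobolev regularity $W^{s,2}(0,T;V')$ of $u^N$ for some $s\in(0,\tfrac12)$. The deterministic drift is handled via Lemma \ref{gig}, since in 2D the estimate \eqref{stima-B-L4} combined with interpolation $\|u\|_{\mathcal{D}(A^{1/4})}^2\le C\|u\|_H\|u\|_V$ gives $B(u^N,u^N)$ bounded in $L^2(0,T;V')$; the stochastic integral is treated by the well-known fractional-time estimate for Itô integrals of Hilbert–Schmidt integrands. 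Together with Aubin–Lions–Simon, this yields tightness.

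Finally I would apply the Skorokhod representation theorem (or its Jakubowski extension in non-metric settings) to obtain a new stochastic basis $(\widetilde\Omega,\widetilde{\mathcal{F}},\widetilde{\mathbb{P}},\widetilde{\mathbb{F}})$, a sequence $\widetilde u^N$ equal in law to $u^N$, and a cylindrical Wiener process $\widetilde W$, all converging a.s. in the chosen topology. The limit $\widetilde u$ inherits the regularity \eqref{reg-u-in-def}. The passage to the limit in the linear terms is immediate; the main obstacle, as usual, is the nonlinearity $B(\widetilde u^N,\widetilde u^N)$ and the identification of the stochastic integral. Strong convergence in $L^2(0,T;H)$, together with the $L^\infty(0,T;H)\cap L^2(0,T;V)$ bounds, allows one to pass to the limit in $\int_0^t\langle B(\widetilde u^N,\widetilde u^N),\psi\rangle\,\mathrm{d}s$ by writing $\langle B(u,u),\psi\rangle = -\langle B(u,\psi),u\rangle$ and using $\psi\in V$. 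For the stochastic term I would verify by the classical martingale characterisation (computing first and second moments of the natural candidate martingale and quadratic variation against test functions) that $\int_0^t G(\widetilde u(s))\,\mathrm{d}\widetilde W(s)$ is indeed the limit. This yields the martingale solution in the sense of Definition \ref{mart_sol_def}.
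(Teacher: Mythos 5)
Your proposal is correct and follows essentially the same route the paper indicates: the Galerkin approximation with uniform energy estimates obtained via It\^o's formula and the cancellation \eqref{Buvv=0}, followed by tightness and passage to the limit, exactly as in the Flandoli--G\c{a}tarek scheme that the paper cites. The paper only sketches this argument, and your more detailed version (fractional time regularity, Skorokhod representation, martingale identification of the stochastic integral) fills it in along standard lines.
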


The proof is based on the finite dimensional Galerkin approximation for which uniform estimates are found; they allow to pass to the limit so to recover the full (infinite dimensional) Navier-Stokes equation. The conservation of the energy (obtained thanks to property \eqref{Buvv=0}) allows to pass from a local (in time) existence result to the global one (see, e.g.,  \cite{FlaGat95} and \cite{KS_BOOK}).

Moreover a mean energy estimate holds; this is crucial for the tightness in the next section, to prove the existence of invariant measures.
We write  it here, already for the full system \eqref{sns} but one should consider it first for the finite dimensional Galerkin approximation. 
By means of It\^o formula for $\|u(t)\|_H^2$ and assuming that the constant  $C_1$ in Assumption \ref{noise1} vanishes, 
we easily get
\begin{equation}\label{mean-sq-energy}
\E\|u(t)\|_H^2+\nu \int_0^t \|A^{\frac 12}u(s)\|_H^2{\rm d}s\le
\|u_0\|_H^2+ \frac 1\nu \int_0^t \|f(s)\|^2_{V^\prime}{\rm d}s+ t C_2.
\end{equation}
An analogous estimate, but with different multiplicative constants, holds under assumption
\ref{noise1} if $C_1$ is small enough, i.e. $C_1<4\nu \lambda_1$ (cf. Lemma 3.1 in \cite{FerZanSNS}): there exist two positive constants $C_\nu$ and $C_\nu^\prime$ such that
\begin{equation}
\E\|u(t)\|_H^2+C_\nu \int_0^t \|A^{\frac 12}u(s)\|_H^2{\rm d}s\le
\|u_0\|_H^2+ C^\prime_\nu \int_0^t \|f(s)\|^2_{V^\prime}{\rm d}s+ t C_2.
\end{equation}

\begin{remark} \label{oss-splitting-flandoli}
Let us point out that it is not necessary that the operator $G(u)$ is Hilbert-Schmidt with values in $H$. For instance, in the additive case
if one gives up the mean square estimate \eqref{mean-sq-energy}, it is possible to weaken the  assumption on the noise term 
asking the range of $G$ to be  contained in the space ${\cal D}(A^{\alpha})$ for some  $\alpha>\frac 14$; see details in \cite{Fla94}.
Notice that $G \in L_{HS}(U,H)$ corresponds to the stronger condition  that the range of $G$ is  contained in the space ${\cal D}(A^{\alpha})$ for some  $\alpha>\frac 12$.
An example of such an operator is $G=A^{-\alpha}$ when $U=H$.

For $\alpha>\frac 14$, the problem is studied by introducing the auxiliary processes $z$ and $v$, where $z$ is the solution of the linear stochastic Stokes equation
\begin{equation}\label{OU-eq}
     dz(t) +\nu Az(t)\  {\rm d}t= G {\rm d}W(t); \qquad z(0)=0
\end{equation}
     
and  $v=u-z$ fulfills the random equation
\[
 \frac{dv}{dt} +\nu Av+B(v,v)+B(v,z)+B(z,v)=f -B(z,z) ; \qquad v(0)=u_{0}.
\]
Assuming that the range of $G$ is contained in ${\cal D}(A^{\alpha})$ for $\alpha>\frac 14$, it follows that a.a. paths of $z$ are in $C([0,+\infty);{\cal D}(A^{\frac 14}))$ so $B(z,z)\in C([0,+\infty);V^\prime)$. Working pathwise, by classical techniques we get  the existence of a unique $v\in C([0,T];H)\cap L^2(0,T;V)$; 
moreover by interpolation one gets $v \in L^4(0,T;{\cal D}(A^{\frac 14}))$ too. So $u=v+z\in C([0,T];H) \cap L^{4}(0,T;{\cal D}(A^{\frac 14}))$. 
The regularity of the paths of the process $u$ is enough to define the  bilinear term, thanks to \eqref{stima-B-L4}, and $B(u,u)\in L^2(0,T;V^\prime)$. Hence  
 the equation \eqref{sns} holds in the space $V^\prime$.
But  the energy estimate now holds pathwise for $v$ and $u$, not in mean square. 
\end{remark}

In the next step we deal with the pathiwise uniqueness. In addition to Assumption \ref{noise1} we assume the following.

\begin{enumerate}[start=2,label=\textbf{(A\arabic*)}]
	\itemsep0.3em
	\item \label{noise2} 
The operator $G$ is Lipschitz continuous, i.e.,
\begin{equation*} 
  \exists \ L_G >0 : \quad  \|G(u)-G(v)\|_{L_{HS}(U,H)} \le L_G\|u-v\|_H \qquad \forall \ u, v \in H.
\end{equation*}
\end{enumerate}

Then we consider the pathwise uniqueness. When the noise is additive, the difference of two solutions fulfills an equation without noise term and the proof of uniqueness resembles that in the deterministic setting (see, e.g., \cite{Fer03}). Some changes are required when the noise depends on the unknown velocity $u$ (see \cite{FerZanSNS} and references therein).  We have the following result, based on a technique by Schmalfuss \cite{Sch1997}.
\begin{proposition}
\label{prop_uniq_sol}
Let $T>0$. To the assumptions of Proposition \ref{teo-esistenza-sol-mart} we add \ref{noise2}.
\\
Let $(\widetilde\Omega, \widetilde{\mathcal{F}}, \widetilde{\mathbb{P}},  \widetilde{\mathbb{F}},u_i)$, $i=1,2$ be two martingale solutions to \eqref{sns} with the same initial velocity. Then $\widetilde{\mathbb{P}}(u_1(t)=u_2(t)\, \ \text{for all} \ t \in [0,T] )=1$, that is solutions to equation \eqref{sns} are pathwise unique.
\end{proposition}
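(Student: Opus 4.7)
Let $w = u_1-u_2$. Since both solutions share the same initial datum and the same Wiener process $\widetilde W$, $w$ satisfies the equation (interpreted in $V'$)
\begin{equation*}
\df w + \nu A w\, \df t + \bigl[B(u_1,u_1)-B(u_2,u_2)\bigr]\,\df t = \bigl[G(u_1)-G(u_2)\bigr]\,\df \widetilde W,\qquad w(0)=0.
\end{equation*}
Using the bilinearity of $B$ I would write $B(u_1,u_1)-B(u_2,u_2) = B(w,u_1)+B(u_2,w)$ and then apply the It\^o formula to $\|w(t)\|_H^2$, which is justified because both $u_i$ have paths in $C([0,T];H)\cap L^2(0,T;V)$ (this is the classical Lions--Magenes/Pardoux--Krylov It\^o formula in the Gelfand triple $V\subset H\subset V'$). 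The antisymmetry property \eqref{Buvv=0} makes $\langle B(u_2,w),w\rangle$ vanish, leaving the single problematic bilinear contribution $\langle B(w,u_1),w\rangle$.

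The two-dimensional Ladyzhenskaya inequality $\|w\|_{L^4}^2 \lesssim \|w\|_H \|w\|_V$ gives
\begin{equation*}
|\langle B(w,u_1),w\rangle| \le C\, \|w\|_H\, \|w\|_V\, \|u_1\|_V \le \nu\|w\|_V^2 + \frac{C^2}{4\nu}\|u_1\|_V^2\,\|w\|_H^2 ,
\end{equation*}
and by \ref{noise2} the correction term from It\^o satisfies $\|G(u_1)-G(u_2)\|_{L_{HS}(U,H)}^2 \le L_G^2\|w\|_H^2$. Combining everything, the $\nu\|w\|_V^2$ gets absorbed into the dissipation and I obtain
\begin{equation*}
\df \|w(t)\|_H^2 + \nu\|w(t)\|_V^2\,\df t \le a(t)\, \|w(t)\|_H^2\, \df t + \df M(t),
\end{equation*}
where $a(t) := \tfrac{C^2}{2\nu}\|u_1(t)\|_V^2 + L_G^2$ and $M$ is the local martingale coming from $2\langle w, (G(u_1)-G(u_2))\,\df \widetilde W\rangle$.

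The crux (and the Schmalfuss trick) is that $a\in L^1(0,T)$ only $\widetilde{\mathbb P}$-a.s., without a uniform $\omega$-bound, so a naive expectation-Gronwall does not close. To circumvent this I would apply It\^o's formula to the weighted process $Y(t)=e^{-\int_0^t a(s)\,\df s}\|w(t)\|_H^2$; the exponential factor exactly cancels the $a(t)\|w\|_H^2$ drift and yields $\df Y(t) \le e^{-\int_0^t a(s)\,\df s}\,\df M(t)$. Introducing the stopping times
\begin{equation*}
\tau_N := \inf\Bigl\{t\in[0,T]:\, \|u_1(t)\|_H^2+\|u_2(t)\|_H^2 + \int_0^t\bigl(\|u_1(s)\|_V^2+\|u_2(s)\|_V^2\bigr)\,\df s\ge N\Bigr\}\wedge T,
\end{equation*}
the stopped stochastic integral is a true martingale and the weight $e^{-\int_0^{\cdot}a}$ is bounded, so taking expectations gives $\widetilde{\mathbb E}[Y(t\wedge\tau_N)]\le 0$. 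Since $Y\ge 0$ we deduce $w(t\wedge\tau_N)=0$ $\widetilde{\mathbb P}$-a.s.

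Finally, by \eqref{reg-u-in-def} one has $\tau_N\uparrow T$ $\widetilde{\mathbb P}$-a.s.\ as $N\to\infty$, so $w(t)=0$ $\widetilde{\mathbb P}$-a.s.\ for every $t\in[0,T]$, and the continuity of the trajectories in $H$ upgrades this to $\widetilde{\mathbb P}(u_1(t)=u_2(t)\ \text{for all } t\in[0,T])=1$. The main obstacle throughout is the lack of an $\omega$-uniform bound on $\|u_1\|_V^2$ appearing in $a(t)$; the exponential weight combined with the localization by $\tau_N$ is precisely the device that turns a pathwise Gronwall argument into one that survives after taking expectations.
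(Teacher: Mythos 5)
Your proof is correct and follows exactly the Schmalfuss exponential-weight technique that the paper itself cites for this proposition (the paper gives no detailed proof, only the reference to \cite{Sch1997} and \cite{FerZanSNS}): Ladyzhenskaya estimate on the single surviving trilinear term, the weight $e^{-\int_0^t a(s)\,\df s}$ to cancel the merely a.s.-integrable Gronwall coefficient, and localization so the stochastic integral has zero expectation. No gaps.
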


\begin{remark}\label{oss-regol}
A more regular solution can be obtained in a similar way, if the initial velocity and the forcing terms are more regular in space. For example one considers the dynamics not in the space $H$ but in a more regular space ${\cal D}(A^a)$ for  some $a>0$. We refer to \cite{Fla94} and \cite{Fer97}
in the case of additive noise.

By the way, notice that the papers on the bidimensional Navier-Stokes equations which exploit the conservation of the enstrophy (which is the curl of the velocity) consider the problem in a box with periodic boundary conditions, i.e. they work on the torus. In that setting regularity results can be easily obtained in any space ${\cal D}(A^k)$, i.e.  for any $k>0$.
\end{remark}

\section{Existence of invariant measures}
\label{S:ex-inv-meas}

Starting from  this section we assume that $f$ is stationary in time,
i.e. it is independent of time 

As soon as the existence of martingale solutions and the pathwise uniqueness are obtained,  the solution is a strong one  in the sense of Definition \ref{strong_sol_def}. 
This is the starting point to define the Markov semigroup. For the  general definitions of this section  we refer to the books \cite{DapZab1,DapZab2} by Da Prato and Zabczyk. 

Let us fix some notations.  Given a Hilbert space $E$, 
we denote by 
 $\mathcal{B}_b(E)$ the  space of all bounded Borel functions   $\phi: E \to \mathbb R$, and by 
$\mathcal{C}_b(E)$
the space of all continuous and bounded functions $\phi:E\to\mathbb R$. 
Moreover we say that a sequence $\{\mu_n\}_n\subset {\cal P}(E)$ of probability measures  on the Borel subset of $E$ weakly converges to a  
probability measure $\mu\in {\cal P}(E)$ when
\[
\int_E \phi \, {\rm d}\mu_k \to \int_E \phi\,{\rm d}\mu \quad \text{as} \ k \rightarrow \infty, \quad \forall \ \phi \in \mathcal{C}_b(E).
\]
For short we write $\mu_k  \rightharpoonup\mu$.

We denote by $u(t;x)$ the solution  evaluated at time $t>0$,  started at time $0$  from $x$.
We know  that for any $x\in H$ there exists a unique strong solution $u(t;x)$ defined for any time $t\ge0$. 
We introduce the operator $P_t$  for each fixed time  $t>0$:
\begin{equation}
    \label{sem}
(P_t\phi)(x)=\mathbb E(\phi(u(t;x))), \qquad  \phi\in \mathcal{B}_b(H)
\end{equation}
This defines a  Markov semigroup acting on $\mathcal{B}_b(H)$. 
When we consider more regular solutions as in Remark \ref{oss-regol}, we consider 
it acting on functions $\phi\in \mathcal{B}_b(E)$ for some more regular Hilbert space $E\subset H$. Let us introduce the definitions in this more general setting.

We consider $P_t:\mathcal{B}_b(E)\to \mathcal{B}_b(E)$.
Its dual operator defines the semigroup 
$\{P_t^{\star}\}_{t \ge 0}$ which governs the evolution of the laws.
We say that a  probability measure $\mu$, defined on the Borel subsets of $E$, is {\em invariant} if
\[P_t^{\star} \mu = \mu \qquad \forall t\ge 0.\]
Equivalently, for arbitrary $t$,
\[
\int_E P_t \phi \: d\mu =\int_E \phi \:d\mu \hspace{1cm} \forall \phi \in \mathcal{B}_b(E).
\]

The first property  one looks for is  the Feller property of  the Markov semigroup, i.e.  
$P_t:\mathcal{C}_b(E)\to \mathcal{C}_b(E)$ for any $t\ge0$.
We always assume that it holds true. We will provide details in the proof of the next Theorem.

As explained in the Introduction, we are interested in the long time behaviour 
of the system \eqref{sns}.
The invariant measures are strictly related to the asymptotic behaviour of the law of the 
solution process $u$.
As a first result we have that if for some initial condition $x$ the law of $u(t;x)$ 
weakly converges to some probability measure $\mu$ as $t\to+\infty$, then 
$\mu$ is an invariant measure.

Weaker conditions allow to get the existence of an invariant measure. 
Usually one proves the existence of an invariant measure by means of the 
Krylov-Bogoliubov's technique. We state it in the next Proposition. It involves the time averages of the laws
 instead of asking directly the (stronger) convergence of the law, as explained before. 
 Let us denote by ${\cal L}(u(t;x))$ the law of  $u(t;x)$ for fixed time $t>0$ and initial velocity $x$.

\begin{proposition} \label{kryl-bog}
Assume that the Markov transition semigroup  is Feller in $E$, i.e. 
$P_t:\mathcal{C}_b(E)\to \mathcal{C}_b(E)$ for any $t$.
\\
Suppose that there exists an initial condition $x\in E$ and  a sequence of times   $t_n \uparrow +\infty$ such that
\[
{1 \over t_n} \int_0^{t_n} {\cal L}(u(r;x)) dr \rightharpoonup \mu
\hspace{1cm} \mbox{  as } n \rightarrow +\infty.
\]
Then $\mu$ is an invariant measure.
\end{proposition}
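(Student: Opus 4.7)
The plan is to verify directly the dual formulation of invariance, namely that $\int_E P_t \phi \, \df\mu = \int_E \phi \, \df\mu$ for every $\phi \in \mathcal{C}_b(E)$ and every $t\ge 0$; this is equivalent to $P_t^\star \mu = \mu$ since $\mathcal{C}_b(E)$ is measure-determining on the Borel $\sigma$-algebra of $E$ (a monotone class / density argument then extends the identity to $\mathcal{B}_b(E)$ if desired). The Feller hypothesis enters precisely at this step: it ensures $P_t\phi \in \mathcal{C}_b(E)$, so that $P_t\phi$ is an admissible test function against the weak limit $\mu_n := \frac{1}{t_n}\int_0^{t_n} {\cal L}(u(r;x))\,\df r \rightharpoonup \mu$.

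First I would exploit the weak convergence, writing
$$\int_E P_t\phi\,\df\mu \;=\; \lim_{n\to\infty} \int_E P_t\phi\,\df\mu_n \;=\; \lim_{n\to\infty} \frac{1}{t_n}\int_0^{t_n} \E\bigl[P_t\phi(u(r;x))\bigr]\,\df r.$$
By the Markov property together with the semigroup identity $P_t\phi(y) = \E[\phi(u(t;y))]$, one has $\E[P_t\phi(u(r;x))] = \E[\phi(u(r+t;x))]$, and so, after the change of variables $s = r+t$,
$$\int_E P_t\phi\,\df\mu \;=\; \lim_{n\to\infty} \frac{1}{t_n}\int_t^{t_n+t} \E\bigl[\phi(u(s;x))\bigr]\,\df s.$$
Applying weak convergence directly to $\phi$ gives $\int_E \phi\,\df\mu = \lim_n \frac{1}{t_n}\int_0^{t_n}\E[\phi(u(s;x))]\,\df s$, so the difference between the two expressions equals
$$\frac{1}{t_n}\Bigl(\int_{t_n}^{t_n+t}\E[\phi(u(s;x))]\,\df s \;-\; \int_0^{t}\E[\phi(u(s;x))]\,\df s\Bigr),$$
which is bounded in absolute value by $2t\|\phi\|_\infty / t_n$ and vanishes as $n\to\infty$.

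There is no genuine obstacle once the two ingredients are in place: the Feller property to legitimize testing against $P_t\phi$, and the $L^\infty$-bound on $\phi$ to control the two length-$t$ boundary integrals that arise from the shift. The only point demanding a little care is the transition from $\E[P_t\phi(u(r;x))]$ to $\E[\phi(u(r+t;x))]$, which uses the Markov property of the strong solution (guaranteed by pathwise uniqueness, see Proposition \ref{prop_uniq_sol}) and the time-homogeneity coming from the standing assumption that $f$ is independent of time.
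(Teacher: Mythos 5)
Your proof is correct and is precisely the classical Krylov--Bogoliubov argument: test the weak limit against $P_t\phi$ (legitimate by the Feller hypothesis), use the semigroup identity $P_r(P_t\phi)(x)=P_{r+t}\phi(x)$ to shift the time average, and absorb the two boundary integrals of length $t$ via the bound $2t\|\phi\|_\infty/t_n\to 0$. The paper states this proposition without proof, deferring to the standard references (Da Prato--Zabczyk), and your argument coincides with the proof given there, including the correct identification of where the Feller property and the Markov/time-homogeneity properties are used.
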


We now quote  the two first results \cite{Fla94} and \cite{chow} for the bidimensional Navier-Stokes equation \eqref{sns} with additive noise operator $G$. Many others have been obtained later on.
\begin{theorem}
 Let $f$ and $G$ satisfy one of the following conditions:
 \begin{itemize}
 \item[i)] the range of $G$ is contained in ${\cal D}(A^{\frac 14 +\beta_0})$ for some positive  $\beta_0$ and 
$f\in {\cal D}(A^{-\frac 12+\theta})$ 
for some $\theta\in (0,2\beta_0)\cap (0,\frac 12]$;
\item[ii)] the range of $G$ is contained in ${\cal D}(A^{\frac 12 +\beta_0})$ for some positive  $\beta_0$ and 
 $f\in V^\prime$.
 \end{itemize}
 Then the Navier-Stokes equation \eqref{sns} has an invariant measure.
\end{theorem}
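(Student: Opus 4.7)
The plan is to apply the Krylov--Bogoliubov criterion of Proposition \ref{kryl-bog}. Two ingredients are needed: the Feller property of $\{P_t\}$ on the phase space and tightness of the Cesaro averages $\tfrac{1}{t}\int_0^t \mathcal{L}(u(r;x))\,dr$. The Feller property is essentially automatic in the additive setting: $u(t;x)-u(t;y)$ solves a random PDE \emph{without} noise term, so standard energy estimates combined with Gr\"onwall's lemma yield continuity of $x\mapsto u(t;x)$ from the phase space into $L^2(\Omega;H)$, hence $P_t\,\mathcal{C}_b\subset\mathcal{C}_b$. Tightness in $H$ is in turn reduced, via Markov's inequality and the compact embedding $V\subset H$, to a uniform bound of the shape
\begin{equation*}
\frac{1}{t}\int_0^t \E\|u(r;x)\|_V^2\, dr \le M\qquad \text{for all }t\ge 1.
\end{equation*}
I would establish this bound separately in the two cases.

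In case (ii), the range of $G$ lies in $\mathcal{D}(A^{1/2+\beta_0})$, so $G\in L_{HS}(U,H)$ and Assumption \ref{noise1} holds with $C_1=0$, $C_2=\|G\|_{L_{HS}(U,H)}^2$. Inequality \eqref{mean-sq-energy} applied to the stationary $f$ and divided by $t$ gives directly
\begin{equation*}
\frac{\nu}{t}\int_0^t \E\|u(r;x)\|_V^2\,dr \le \frac{\|x\|_H^2}{t}+\frac{1}{\nu}\|f\|_{V'}^2+C_2,
\end{equation*}
which is exactly the required bound.

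In case (i) the noise is too rough for the It\^o formula on $\|u\|_H^2$, so I follow the splitting technique of Remark \ref{oss-splitting-flandoli}. Introduce the \emph{stationary} Ornstein--Uhlenbeck process $z$ associated with the Stokes equation \eqref{OU-eq}; under the hypothesis that the range of $G$ sits in $\mathcal{D}(A^{1/4+\beta_0})$, this stationary solution exists with paths in $\mathcal{D}(A^{1/4+\beta'})$ for some $\beta'\in(0,\beta_0)$ and, by stationarity, has moments of every order bounded \emph{uniformly in time}. Write $u=v+z$, so that $v$ satisfies a pathwise random PDE. Testing this equation with $v$, using $\langle B(v,v),v\rangle=0=\langle B(z,v),v\rangle$ and estimating the cross terms $\langle B(v,z),v\rangle$ and $\langle B(z,z),v\rangle$ via Lemma \ref{gig}, one derives a pathwise differential inequality of the schematic form
\begin{equation*}
\tfrac{d}{dt}\|v\|_H^2+\nu\|v\|_V^2 \le C\bigl(\|f\|^2_{\mathcal{D}(A^{-1/2+\theta})}+\|z\|_{\mathcal{D}(A^{1/4+\beta'})}^{p}\bigr)+C'\|v\|_H^2\bigl(1+\|z\|_{\mathcal{D}(A^{1/4+\beta'})}^{q}\bigr).
\end{equation*}
Integrating in time, taking expectation and invoking the uniform-in-time moments of $z$ yields the Cesaro bound on $\E\|v(r)\|_V^2$, and since $z$ is stationary the same bound transfers to $u=v+z$.

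The main obstacle is the Sobolev bookkeeping in case (i): the interpolation exponents in Lemma \ref{gig} must be chosen so that $B(z,z)$, $B(v,z)$ and the forcing $f$ all fit consistently in the appropriate dual spaces, with the quadratic term in $\|v\|_V^2$ absorbed into the dissipation. The conditions $\beta_0>0$ and $\theta\in(0,2\beta_0)\cap(0,1/2]$ are precisely what make this delicate balance work; I would expect the analysis to proceed pathwise for $v$ and only take expectations at the very end, so that no spurious martingale terms arise.
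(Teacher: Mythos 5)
Your overall strategy (Krylov--Bogoliubov, Feller property, tightness via Prokhorov) and your treatment of case (ii) coincide with the paper's proof. The gap is in case (i), at the step ``integrating in time, taking expectation and invoking the uniform-in-time moments of $z$ yields the Cesaro bound on $\mathbb{E}\|v(r)\|_V^2$''. After integrating your schematic inequality, the right-hand side contains $C'\int_0^t \mathbb{E}\bigl[\|v(s)\|_H^2\,(1+\|z(s)\|^q)\bigr]\,{\rm d}s$, and this does not close: $v$ and $z$ are not independent, so the expectation does not factor; the alternative of applying Gr\"onwall pathwise \emph{before} taking expectations produces the factor $\exp\bigl(C'\int_0^t(1+\|z(s)\|^q)\,{\rm d}s\bigr)$, whose expectation is infinite, since the exponent $q$ forced by Lemma \ref{gig} when estimating $\langle B(v,z),v\rangle$ with $z$ only in $\mathcal{D}(A^{\frac14+\beta'})$ is $q=4/(1+4\beta')>2$ for small $\beta_0$, and Gaussian variables have no exponential moments of order larger than $2$ (and even for $q\le 2$ the resulting bound grows exponentially in $t$, destroying uniformity of the Ces\`aro average). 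This is precisely why the paper, following \cite{Fla94}, does \emph{not} prove a mean-square estimate in case (i): it establishes a uniform-in-time bound \emph{in probability}, $\sup_{r>0}P(\|A^{\min(\frac14+\beta,\theta)}u(r;0)\|_H>M_\epsilon)<\epsilon$, and deduces tightness from the compact embedding $\mathcal{D}(A^{\min(\frac14+\beta,\theta)})\subset H$ (note also that the correct tightness space involves $\theta$ and is in general weaker than $V$). The pathwise argument needs an ingredient your sketch omits: the auxiliary Ornstein--Uhlenbeck process is taken to solve ${\rm d}z+(\nu A+\alpha)z\,{\rm d}t=G\,{\rm d}W$ with a large extra damping $\alpha$, so that the stationary moments of $z$ can be made small enough for the dissipation $\nu\lambda_1\|v\|_H^2$ to dominate the Gr\"onwall coefficient on time average (via the ergodic theorem for $\frac1t\int_0^t\|z\|^q\,{\rm d}s$); without this the $\|v\|_H^2\|z\|^q$ term cannot be absorbed even on average.

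A minor point: for the Feller property you only need continuity of $x\mapsto u(t;x)$ almost surely (or in probability), which follows from the pathwise estimate $\|u(t;x)-u(t;y)\|_H^2\le \|x-y\|_H^2\exp\bigl(C\int_0^t\|u(s;x)\|_V^2\,{\rm d}s\bigr)$ together with dominated convergence; continuity into $L^2(\Omega;H)$, as you assert, would again require an exponential moment that is not available here.
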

\begin{proof}
For the Navier-Stokes equation \eqref{sns}, the Feller property in $H$ is easily obtained; roughly speaking it corresponds to the property of continuity of the solution from the initial data.
The convergence of the time averages as in Proposition \ref{kryl-bog} 
is obtained by means of a tightness of the laws, and then using the Prokhorov theorem. 

In case i), for initial velocity $x=0$,  in \cite{Fla94}  a  uniform (in time) bound in probability is proved; this depends on the 
dissipative property of the Navier-Stokes equation. 
For any $\epsilon>0$ there exists 
$M_\epsilon>0$ such that
\[
\sup_{r>0} P( \|A^{\min(\frac 14+\beta,\theta)} u(r;0)\|_H>M_\epsilon)<\epsilon,
\]
for any $\beta<\beta_0$. 
\\
The tightness follows from the compact embedding
${\cal D}(A^{\min(\frac 14+\beta,\theta)})\subset H$.

In case ii),   the mean square estimate \eqref{mean-sq-energy} is used  to get the tightness of the laws 
\[ \frac 1t \int_0^t \mathbb E\|u(r;0)\|_V^2{\rm d}r\le 
 \frac 1{\nu^2} \|f\|^2_{V^\prime}+ \frac 1\nu C_2,
 \qquad \forall t>0 
\]
where for simplicity we consider again initial velocity $x=0$.
We write it here with $f$ even if the deterministic forcing term is not considered in \cite{chow}. The tightness in $H$ follows from the compact embedding $V\subset H$. The latter estimate provides also that the invariant measure has support contained in the space $V$.
\end{proof}

In conclusion, the assumptions to get the existence of at least one  invariant measure concern some space regularity of the forcing terms.
Even $G=0$ can be considered; in this case any stationary solution of the deterministic Navier-Stokes equation can be viewed as an invariant measure.

\begin{remark}
\label{star}
In a similar way,
the existence of an invariant measure is obtained in the multiplicative case when the operator $G$ fulfils \ref{noise1}, \ref{noise2} and $C_1<4\nu\lambda_1$. 
We refer to   \cite{FlaGat95} and  \cite{FerZanSNS}. 
\end{remark}

\section{Uniqueness of the invariant measure}
\label{S:unique-meas}

Proving the uniqueness of the invariant measure is in general more challenging than showing its existence. 
Such a problem has been investigated mostly for the case of an additive noise, whereas the 
results for the case of a multiplicative noise are much scarcer. 
To start let us remind the results for the deterministic Navier-Stokes equation, i.e. equation \eqref{sns} with  $G=0$ (see \cite[Theorem II.1.3]{Temam2001}).
\begin{theorem}
    For every $f\in V^\prime$ and $\nu>0$,  there exists at least one solution of the steady Navier-Stokes equation
    \[
-  \nu \Delta u +(u \cdot \nabla)u +\nabla p =f ; 
\qquad \mbox{\text{div} }u =0 
    \]
If
\[\nu^2>C_1 \|f\|_{V^\prime}\]
where $C_1$ is a constant depending only on the domain, then the stationary solution is
unique.
\end{theorem}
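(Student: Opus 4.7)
The plan is to handle existence and uniqueness separately, using Galerkin approximation plus compactness for the former and an energy estimate on the difference of two solutions for the latter.

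For existence, I first project the equation onto divergence-free fields to obtain the abstract form $\nu A u + B(u,u) = f$ in $V'$. Setting $V_N = \mathrm{Span}\{e_1,\dots,e_N\}$, I look for $u_N \in V_N$ satisfying
\[
\nu (u_N,v)_V + \langle B(u_N,u_N), v\rangle = \langle f, v\rangle, \qquad \forall v\in V_N.
\]
This is a finite-dimensional nonlinear system, and a topological argument (Brouwer's fixed point theorem applied on a sufficiently large ball in $V_N$, or the standard corollary presented in Temam's book) produces a solution. The key a priori bound comes from testing against $u_N$ itself: by the cancellation $\langle B(u_N,u_N),u_N\rangle=0$ from \eqref{Buvv=0}, one has
\[
\nu \|u_N\|_V^2 = \langle f, u_N\rangle \le \|f\|_{V'}\|u_N\|_V,
\]
so $\|u_N\|_V \le \|f\|_{V'}/\nu$ uniformly in $N$. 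Weak compactness in $V$ combined with the compact embedding $V\hookrightarrow H$ then yields a subsequence converging weakly in $V$ and strongly in $H$ to some $u\in V$, and the strong $H$-convergence is precisely what is needed to pass to the limit in the bilinear term $B(u_N,u_N)$ tested against any fixed $e_k$.

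For uniqueness, assume $u_1,u_2\in V$ are two solutions and set $w=u_1-u_2$. Writing $B(u_1,u_1)-B(u_2,u_2)=B(u_1,w)+B(w,u_2)$ gives $\nu A w + B(u_1,w) + B(w,u_2) = 0$ in $V'$. Pairing with $w$, using $\langle B(u_1,w),w\rangle=0$ from \eqref{Buvv=0} and the antisymmetry $\langle B(w,u_2),w\rangle = -\langle B(w,w),u_2\rangle$, I obtain
\[
\nu \|w\|_V^2 = \langle B(w,w), u_2\rangle.
\]
The main obstacle, and the genuinely two-dimensional step, is a sharp estimate of this trilinear term. Ladyzhenskaya's inequality $\|\varphi\|_{L^4}^2 \le C\|\varphi\|_H\|\varphi\|_V$ (essentially Lemma \ref{gig} with $\theta=\rho=\tfrac14$, $\delta=\tfrac12$, combined with the Poincaré inequality) yields
\[
|\langle B(w,w), u_2\rangle| \le \|w\|_{L^4}\|\nabla w\|_{L^2}\|u_2\|_{L^4} \le C_1\|w\|_V^2\|u_2\|_V.
\]
Combining with the bound $\|u_2\|_V \le \|f\|_{V'}/\nu$, which every stationary solution satisfies by the same energy argument as above, gives $\nu \|w\|_V^2 \le C_1\|w\|_V^2\|f\|_{V'}/\nu$. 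Under the smallness condition $\nu^2 > C_1 \|f\|_{V'}$ this forces $\|w\|_V=0$, that is $u_1=u_2$. The analogous argument fails in dimension three precisely because the sharp bound on the trilinear form above is unavailable without extra regularity on $u_2$.
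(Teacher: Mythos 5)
Your argument is correct and is exactly the classical proof of this result: Galerkin approximation with Brouwer's fixed point theorem and the a priori bound $\|u_N\|_V\le \|f\|_{V'}/\nu$ for existence, and the energy estimate on the difference combined with the two-dimensional Ladyzhenskaya inequality for uniqueness under $\nu^2>C_1\|f\|_{V'}$. The paper itself gives no proof, quoting the statement from Temam's book (Theorem II.1.3), and your argument is precisely the one given there, so there is nothing to add.
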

There are some specific results that assert non-uniqueness when the viscosity is not large compared with the external force
 (see, e.g., Section II.4 of \cite{Temam2001}).

When $G\neq 0$, there are many results for the   uniqueness of the invariant measure. 
They involve qualitative conditions on $f$ and $G$;  sometimes even  quantitative conditions   depending on the viscosity $\nu$.
These are based on different techniques. Here we classify them in terms of the  amount of degeneracy of the stochastic forcing term. 
We do not claim to be exhaustive.
This is not meant to be an all encompassing review article as we said in the Introduction. 

The starting point is the dissipative property of the Navier-Stokes equations, due to the term $-\nu\Delta u$ in equations \eqref{eq-fluid}. Considering $u(t)=\sum_{n=1}^\infty u_n(t)e_n$, the evolution of each component $u_n(t)=(u(t),e_n)_H$ is
\begin{equation}
    du_n(t)+[\nu \lambda_n u_n(t)+ \langle B(u(t),u(t)),e_n\rangle] \; {\rm d}t=\langle f, e_n\rangle  {\rm d}t + \langle G(u(t)) {\rm d}W(t), e_n\rangle
\end{equation}
Since $\lambda_n \to +\infty$ as $n\to +\infty$, we have that for $n$ large enough the dissipation is strong and,  roughly speaking, dominates the nonlinearity; however, for small values of $n$ the deterministic nonlinear dynamics is unstable and the role of the noise is crucial to get a unique asymptotic behavior. 

Technically speaking, when the forcing acts on all modes we are in the  “elliptic setting", whereas  when it acts only on the unstable low modes   we  are in the “effectively elliptic setting". 
In both the above scenario, since the noise affects all of the unstable directions, it is not surprising that the Navier-Stokes equation has no more than one invariant measure. 
Anyway the techniques to prove the uniqueness of the invariant measures are different in these two settings.
Lastly, there is the  “hypoelliptic setting" when  one allows for unstable directions not to be directly forced by the noise. The uniqueness of the invariant measure is a consequence of the fact that  the randomness can reach all of the unstable modes;  in fact, the noise is transmitted to the relevant degrees of freedom through the drift term $B$. 

In the next subsections we consider in detail these settings.
Under  assumptions \ref{noise1}-\ref{noise2}, the only work that proves the uniqueness of the invariant measure is \cite{FerZanSNS}. This is the subject of next  Subsection \ref{effect}. 
Then, we revise the main uniqueness results in the case of an additive noise, full or not (i.e. acting on  all modes or on a finite number of modes).

\subsection{Effectively elliptic setting}
\label{effect}

A powerful and flexible technique to investigate the ergodic properties of a system is given in \cite{GHMR17}. 
Here Glatt-Holtz, Mattingly and Richards  identify an intuitive and conceptually simple framework for proving the uniqueness of the invariant measure by a \textit{generalized asymptotic coupling technique} in the “effectively elliptic” setting, that is when the range of the  operator $G$ contains  all the unstable directions.
We speak of “essentially elliptic” setting, since the directions essential to determine the system’s long time
behavior, the unstable directions, are directly forced. 

The \textit{generalized asymptotic coupling technique} 
allows  to work with noises that can be degenerate in an infinite number of directions. 
We shortly sketch the idea behind this method.
The system has finitely many unstable directions (low modes)
and infinitely many stable directions (high modes). One can then use the noise to steer the
unstable directions together and let the dynamics cause the stable directions
to contract, in the spirit of \cite{FP}. Since these techniques used Girsanov’s theorem on some finite dimensional dynamics, they required
that all of the unstable directions are directly forced; this is in fact a type of partial ellipticity assumption. 

Let us briefly recall the results in \cite{GHMR17} in the best form that fits our framework, thus not considering the most possible general case as in \cite{GHMR17}. We denote by $H^{\mathbb{N}}$, with its Borel $\sigma$-field $\mathfrak{B}(H^{\mathbb{N}})$, the space of one-sided infinite sequences $H^{\mathbb{N}}$. By $\mathcal{P}\left(H^{\mathbb{N}}\right)$ we denote the collections of Borel probability measures on $(H^{\mathbb{N}}, \mathfrak{B}(H^{\mathbb{N}}))$.
For given $\mu, \nu \in \mathcal{P}(H^{\mathbb{N}})$ we define the space
\begin{equation*}
\widehat{\mathcal{C}}
(\mu, \nu):= \{ \xi \in \mathcal{P}(H^{\mathbb{N}} \times H^{\mathbb{N}})\ : \ \pi_1(\xi) \ll \mu, \ \pi_2(\xi) \ll \nu\}
\end{equation*}
and call any probability measure from the class $\widehat{C}(\mu, \nu)$ a \textit{generalized coupling} for $\mu, \nu$.
Here $\pi_i(\xi)$ denotes the $i$-th marginal distribution of $\xi$, $i=1,2$ and we recall that $\mu \ll \nu$ means that $\mu$ is absolutely continuous w.r.t. $\nu$.
Let $\{u(n)\}_{n \in \mathbb{N}}$ be a Markov chain with state space $(H, \mathfrak{B}(H))$. The law of the sequence $\{u(n)\}_{n \in \mathbb{N}}$ in $(H^{\mathbb{N}}, \mathfrak{B}(H^{\mathbb{N}}))$ with initial datum $x \in H$ is denoted by $\mathbb{P}_{x} \in \mathcal{P}\left(H^{\mathbb{N}}\right)$. 

We have the following result.
\begin{theorem}
    \label{GHMRthm}
If for any $x, \tilde x \in H$ there exists some $\xi:=\xi_{x, \tilde x} \in \widehat{\mathcal{C}}(\mathbb{P}_{x}, \mathbb{P}_{\tilde x})$ with 
\begin{equation}
\label{cond_GHMR}
\xi \left((u, \tilde u) \in H^\mathbb{N} \times H^{\mathbb{N}} \ : \ 
 \lim_{n \rightarrow \infty} \|u(n) - \tilde u(n)\|_H=0
\right)>0,
\end{equation}
then there exists at most one ergodic invariant probability measure $\mu$ supported on $H$.  
\end{theorem}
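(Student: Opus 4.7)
The plan is to argue by contradiction: suppose there exist two distinct ergodic invariant probability measures $\mu_1\ne \mu_2$ on $H$, and derive a contradiction from the generalized coupling hypothesis. Since $H$ is a Polish space, bounded Lipschitz functions are measure-determining on $\mathcal{P}(H)$, so there exists a bounded Lipschitz $\phi:H\to\mathbb{R}$, with Lipschitz constant $L_\phi$, such that
\[
m_1:=\int_H \phi\,d\mu_1\ne \int_H \phi\,d\mu_2=:m_2.
\]

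Next, I would pass to the canonical realization of the Markov chain on $(H^{\mathbb{N}},\mathfrak{B}(H^{\mathbb{N}}))$ and consider the time shift $T$. For each $i=1,2$ the probability $\mathbb{P}_{\mu_i}:=\int_H \mathbb{P}_x\,d\mu_i(x)$ is $T$-invariant, and ergodicity of $\mu_i$ for the chain is equivalent to ergodicity of $\mathbb{P}_{\mu_i}$ for $T$. Applying Birkhoff's theorem to the cylindrical observable $\Phi(\mathbf{u}):=\phi(u(0))$ produces, for each $i$, a Borel set $G_i\subset H$ with $\mu_i(G_i)=1$ such that for every $x\in G_i$,
\[
\frac{1}{n}\sum_{k=0}^{n-1}\phi(u(k))\xrightarrow[n\to\infty]{} m_i\qquad \mathbb{P}_x\text{-a.s.}
\]
Both $G_1$ and $G_2$ are nonempty, so I can fix $x\in G_1$ and $\tilde{x}\in G_2$ and invoke the hypothesis to obtain a generalized coupling $\xi=\xi_{x,\tilde{x}}\in\widehat{\mathcal{C}}(\mathbb{P}_x,\mathbb{P}_{\tilde{x}})$ with $\xi(\mathcal{A})>0$, where $\mathcal{A}:=\{(\mathbf{u},\tilde{\mathbf{u}}):\|u(n)-\tilde{u}(n)\|_H\to 0\}$.

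Since $\pi_1(\xi)\ll\mathbb{P}_x$, the $\mathbb{P}_x$-a.s.\ statement above transfers to the first coordinate under $\xi$, and analogously for the second; thus $\xi$-a.s.\ both Cesaro averages $\frac{1}{n}\sum_{k=0}^{n-1}\phi(u(k))$ and $\frac{1}{n}\sum_{k=0}^{n-1}\phi(\tilde{u}(k))$ converge, to $m_1$ and $m_2$ respectively. On the event $\mathcal{A}$, however, the Lipschitz bound
\[
|\phi(u(k))-\phi(\tilde{u}(k))|\le L_\phi\,\|u(k)-\tilde{u}(k)\|_H\longrightarrow 0
\]
forces the two Cesaro averages to share the same limit; combined with $\xi(\mathcal{A})>0$, this yields $m_1=m_2$, a contradiction.

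The main obstacle, and the point where the argument differs from a classical coupling proof, is that $\xi$ is only a \emph{generalized} coupling: its marginals are merely absolutely continuous with respect to $\mathbb{P}_x$ and $\mathbb{P}_{\tilde{x}}$, not equal to them. The key reason this weaker notion is enough is that Birkhoff's theorem delivers an almost-sure statement, a property preserved under absolute continuity. The only other technical items to check carefully are the transfer of ergodicity from the Markov chain to the shift on path space, and the use of Lipschitz (rather than merely continuous) test functions, so that asymptotic coalescence in $H$ translates cleanly into equality of the time averages without any compactness assumption on trajectories.
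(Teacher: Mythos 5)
Your argument is correct. Note that the paper itself does not prove Theorem \ref{GHMRthm}; it quotes it from \cite{GHMR17}, so there is no internal proof to compare against. Your reconstruction — separating two putative ergodic measures by a bounded Lipschitz $\phi$, applying Birkhoff's theorem on path space to get full-measure sets $G_i$ of starting points with almost-sure convergence of the Cesàro averages to $\int\phi\,d\mu_i$, transferring these almost-sure statements through the marginals of $\xi$ via absolute continuity, and concluding on the positive-probability coalescence event — is precisely the standard proof of this result in the cited literature. The two points you flag as needing care (equivalence of ergodicity of $\mu_i$ with ergodicity of $\mathbb{P}_{\mu_i}$ under the shift, and the role of absolute continuity in preserving almost-sure statements, which is exactly why generalized couplings suffice here) are indeed the essential ingredients, and both are handled correctly.
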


Theorem \ref{GHMRthm} have been used in \cite{GHMR17} to infer the uniqueness of the invariant measure for system \eqref{sns} driven by ad additive noise.
\begin{theorem}
\label{GHMinv}
Let $G \in L_{HS}(U,H)$ in \eqref{sns} be a constant operator.
For every $\nu>0$ there exists $\bar N=\bar N(\nu, \|f\|_{V'}, \|G\|_{L_{HS}(U,H)})$ such that, if 
\begin{equation}
\label{nondeg}
\emph{Rg}(G) \supseteq P_N(H),
\end{equation}
for some $N \ge \bar N$, 
then \eqref{sns} admits a unique invariant measure $\mu\in \mathcal{P}(H)$.
\end{theorem}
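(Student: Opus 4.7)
The plan is to apply Theorem \ref{GHMRthm} to the time-one skeleton $\{u(n;x)\}_{n \in \mathbb{N}}$ of the Markov process associated with \eqref{sns}. So, for each pair of initial data $x, \tilde x \in H$, I must construct a generalized coupling $\xi \in \widehat{\mathcal{C}}(\mathbb{P}_x, \mathbb{P}_{\tilde x})$ that puts positive mass on the event that the two trajectories coalesce in $H$ as $n \to \infty$. The construction is the classical \emph{nudging / asymptotic coupling} strategy underlying \cite{GHMR17}.

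\textbf{Step 1 (Nudged process).} Since $\mathrm{Rg}(G) \supseteq P_N(H)$, fix a bounded right inverse $G^{-1}_N : P_N(H) \to U$. Let $u$ solve \eqref{sns} from $x$ driven by $W$. Introduce an auxiliary process $\tilde u$ starting from $\tilde x$ by
\[
d\tilde u + [\nu A \tilde u + B(\tilde u, \tilde u)]\,dt = f\,dt + G\,dW - \lambda\, P_N(\tilde u - u)\,dt,
\]
with a parameter $\lambda>0$ to be tuned. Pathwise well-posedness of this SPDE is inherited from Proposition \ref{prop_uniq_sol} since the extra feedback drift is linear and bounded on $P_N H$.

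\textbf{Step 2 (Girsanov).} Set $h(t) := \lambda\, G^{-1}_N P_N(\tilde u(t) - u(t))$ and $\tilde W(t) := W(t) + \int_0^t h(s)\,ds$. Provided the Novikov-type bound
\[
\mathbb{E} \exp\Bigl(\tfrac12 \int_0^\infty \|h(s)\|_U^2\,ds\Bigr) < \infty
\]
holds (at least with positive probability, which is all Theorem \ref{GHMRthm} requires via the absolute-continuity condition), Girsanov's theorem provides a probability measure $\tilde{\mathbb{P}} \ll \mathbb P$ on each $\mathcal F_T$ under which $\tilde W$ is a $U$-cylindrical Wiener process and $\tilde u$ solves \eqref{sns} from $\tilde x$ driven by $\tilde W$. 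Pushing the joint law of $(u, \tilde u)$ under $\tilde{\mathbb P}$ forward to integer times yields the required $\xi \in \widehat{\mathcal C}(\mathbb P_x, \mathbb P_{\tilde x})$.

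\textbf{Step 3 (Synchronization).} Let $w := \tilde u - u$. On the low modes,
\[
\tfrac{d}{dt}(P_N w) + \nu A P_N w + \lambda P_N w + P_N\bigl(B(\tilde u, \tilde u) - B(u,u)\bigr) = 0,
\]
so for $\lambda$ large enough (depending on the nonlinear estimates on the finite-dimensional space $P_N H$), $\|P_N w\|_H$ decays exponentially. On the high modes, using $\|A^{1/2} Q_N w\|_H^2 \geq \lambda_{N+1}\|Q_N w\|_H^2$ and the bilinear inequality \eqref{stima-B-L4} applied to $B(\tilde u,\tilde u)-B(u,u) = B(w,\tilde u) + B(u,w)$, one derives
\[
\tfrac{d}{dt}\|Q_N w\|_H^2 + \bigl(2\nu \lambda_{N+1} - C(\|u\|_V^2 + \|\tilde u\|_V^2)\bigr)\|Q_N w\|_H^2 \leq \Phi(P_N w)(1 + \|u\|_V^2 + \|\tilde u\|_V^2),
\]
where $\Phi(P_N w)$ vanishes exponentially. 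The energy balance \eqref{mean-sq-energy} controls the time average of $\|u\|_V^2, \|\tilde u\|_V^2$ by a constant $K = K(\nu, \|f\|_{V'}, \|G\|_{L_{HS}(U,H)})$; choosing $\bar N$ so that $\nu \lambda_{\bar N+1} > K$, a Gronwall argument forces $\|Q_N w\|_H \to 0$ almost surely under $\tilde{\mathbb P}$. Together with the decay of $P_N w$, condition \eqref{cond_GHMR} is verified, and Theorem \ref{GHMRthm} gives uniqueness.

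\textbf{Main obstacle.} The delicate point is the interplay between Steps 2 and 3: a large $\lambda$ promotes exponential decay of $P_N w$ and hence the integrability of $\int_0^\infty\|h\|_U^2$ needed for Girsanov, but it also amplifies the transient nonlinear coupling from $P_N w$ into $Q_N w$; on the other hand, making $\nu\lambda_{N+1}$ dominate $K$ requires $N$ to be large enough that the Stokes spectrum has outrun the enstrophy budget of the stationary dynamics. The quantitative balancing of $\bar N$ and $\lambda$ against $\nu$, $\|f\|_{V'}$ and $\|G\|_{L_{HS}(U,H)}$ is the technical heart of the proof, and it is exactly what determines the threshold $\bar N$ in the statement.
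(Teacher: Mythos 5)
Your overall architecture is exactly the one the paper follows (and the one in \cite{GHMR17}): nudge the second trajectory on the low modes, remove the nudging by a Girsanov shift built from a right inverse of $G$ on $P_NH$, and verify condition \eqref{cond_GHMR} of Theorem \ref{GHMRthm} through a Foias--Prodi synchronization estimate. Steps 1 and 2 are essentially right, modulo one standard technicality you gloss over: since the decay rate of $w=\tilde u-u$ is random, $\int_0^\infty\|h(s)\|_U^2\,{\rm d}s$ is a.s.\ finite but not uniformly bounded, so Novikov does not apply directly; one truncates $h$ on the event that its $L^2$-in-time norm exceeds a threshold and checks that the complementary event keeps positive probability, which is all that absolute continuity plus \eqref{cond_GHMR} require.

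The genuine gap is in Step 3. You split $w$ into $P_Nw$ and $Q_Nw$ and first claim that $\|P_Nw\|_H$ decays exponentially ``for $\lambda$ large enough, depending on the nonlinear estimates on $P_NH$''. This is not justified: the low-mode equation contains $P_N\bigl(B(\tilde u,\tilde u)-B(u,u)\bigr)=P_N\bigl(B(\tilde u,w)+B(w,u)\bigr)$, which depends on the \emph{whole} of $w$, including $Q_Nw$; no choice of $\lambda$ makes the $P_Nw$ dynamics autonomous. Your high-mode estimate then uses the exponential decay of $P_Nw$ as its source term $\Phi(P_Nw)$, so each half of the argument presupposes the other --- the reasoning is circular. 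Worse, testing the low-mode equation against $P_Nw$ destroys the cancellation $\langle B(\tilde u,w),w\rangle=0$, which is precisely the mechanism that makes the Foias--Prodi estimate work. The correct route is to estimate $\|w\|_H^2$ globally: from
\begin{equation*}
\tfrac12\tfrac{d}{dt}\|w\|_H^2+\nu\|w\|_V^2+\lambda\|P_Nw\|_H^2=-\langle B(w,u),w\rangle\le \tfrac{\nu}{2}\|w\|_V^2+\tfrac{C}{\nu}\|u\|_V^2\|w\|_H^2
\end{equation*}
together with $\tfrac{\nu}{2}\|w\|_V^2+\lambda\|P_Nw\|_H^2\ge\min\bigl(\tfrac{\nu\lambda_{N+1}}{2},\lambda\bigr)\|w\|_H^2$, Gronwall and the a.s.\ control of the time average of $\|u\|_V^2$ give pathwise exponential decay of $\|w\|_H$ once $\nu\lambda_{N+1}$ dominates the averaged enstrophy budget; this is the pathwise Foias--Prodi estimate the paper invokes, and it is what actually determines $\bar N$.
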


An example of stochastic forcing term satisfying the above requirements is given by  $G{\rm d}W(t)=\sum_n G f_n {\rm d}\beta_n(t)$ (see \eqref{def-serie-W}) with 
\[
Gf_n=\begin{cases}  \sigma_n\in H,& n\le N,\\0, & n>N. \end{cases}
\]
This noise is finite dimensional, that is only a (sufficiently large) number of modes are stochastically forced.
These are the determining modes (low modes). 
The proof of Theorem \ref{GHMinv} relies on Theorem \ref{GHMRthm}: the idea is to introduce a modification of the Navier-Stokes equation \eqref{sns} such that: 
\begin{enumerate}[label=\textbf{(\arabic*)}]
	\item \label{i} 
 the law of the solution to the new SPDE is absolutely continuous with respect to the law of the solution to the original one \eqref{sns}; 
 \item \label{ii}  for any pair of distinct initial conditions, there is a positive probability that solutions to these systems asymptotically  converge, when evaluated  on a infinite sequence of evenly spaced times. 
 \end{enumerate}
 Such a modification is obtained by introducing a suitable (finite dimensional) shift in the driving Wiener process to force solutions, which start at different initial conditions, together asymptotically as time goes to infinity. The key role in the proof is played by some \textit{pathwise} Foias-Prodi estimates that quantify the minimum number $\bar N$ of modes that need to be activated by the noise in order to get synchronization at infinity. 

In \cite{FerZanSNS} we showed that the generalized coupling techniques of \cite{GHMR21} are flexible enough to deal with noises of multiplicative type. 
In addition to Assumptions \ref{noise1}-\ref{noise2} we impose the following condition on the operator $G$.
\begin{enumerate}[start=3,label=\textbf{(A\arabic*)}]
	\itemsep0.3em
	\item \label{noise3} 
There exists a measurable map $g:H \rightarrow L(H,U)$  such that
\begin{equation}
\label{bound_g}
\sup_{u \in H}\|g(u)\|_{L(H,U)} < \infty
\end{equation}
and
\begin{equation}
\label{GgM}
G(u)g(u)=P_M \qquad \forall\  u \in H,
\end{equation}
for a positive integer $M$.
\end{enumerate}
Notice that the existence of a map $g: H \rightarrow L(H,U)$  fulfilling \eqref{GgM} is equivalent to the following property 
\begin{equation*}
\text{{Rg}}G(u) \supseteq P_MH\qquad \forall\  u \in H,
\end{equation*}
that is Assumption \ref{noise3} can be seen as a non degeneracy condition on the low modes.
An example of stochastic forcing term satisfying the above requirements is given by
\[
G(u){\rm d}W(t)=\sum_{n=1}^M G(u)[f_n]{\rm d}\beta_n(t), \qquad  \text{with} \quad G(u)[f_n]= \frac{\sqrt{\|u\|^2_H +1}}{n+1} e_n, \qquad u \in H.
\]
Our main result reads as follows.
\begin{theorem}
\label{unique_thm}
Let Assumptions \ref{noise1}-\ref{noise2} be in force. Moreover, assume that $\nu >  \frac{11C_1}{4\lambda_1}$.
Then there exists  a positive integer
\begin{equation*}
\bar N=\bar N(L_G, C_1, C_2, \nu, \lambda_1, \|f\|_{V'}),
\end{equation*}
 such that, if \ref{noise3} holds for some  $M \ge \bar N$, then \eqref{sns} possesses at most one ergodic invariant measure $\mu\in \mathcal{P}(H)$. 
\end{theorem}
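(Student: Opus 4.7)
The plan is to verify the generalized coupling criterion of Theorem \ref{GHMRthm} applied to the discrete-time skeleton of the Markov family on $H$. Given $x,\tilde x\in H$, on the original basis carrying $W$ and the solution $u$ starting at $x$, I will construct via Assumption \ref{noise3} a shifted auxiliary process $\tilde u$ starting at $\tilde x$ on the same space, together with a tilted measure $\mathbb Q\ll\mathbb P$ produced by Girsanov's theorem, such that the law of $\tilde u$ under $\mathbb Q$ equals $\mathbb P_{\tilde x}$ while the first marginal of $(u,\tilde u)$ under $\mathbb Q$ stays absolutely continuous with respect to $\mathbb P_x$. Provided $\|u(n)-\tilde u(n)\|_H\to 0$ on a $\mathbb Q$-positive event, the pushforward $(u,\tilde u)_{\#}\mathbb Q$ belongs to $\widehat{\mathcal C}(\mathbb P_x,\mathbb P_{\tilde x})$ and satisfies \eqref{cond_GHMR}, so Theorem \ref{GHMRthm} forces uniqueness.

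To implement this, fix a parameter $\lambda>0$ to be tuned, write $\rho:=u-\tilde u$, and let $\tilde u$ solve
\begin{equation*}
d\tilde u + [\nu A\tilde u + B(\tilde u,\tilde u)]\,dt = f\,dt - \lambda P_M\rho\,dt + G(\tilde u)\,dW;
\end{equation*}
thanks to the identity $G(\tilde u)g(\tilde u)=P_M$, this is \eqref{sns} for $\tilde u$ driven by $\tilde W := W - \int_0^\cdot \lambda g(\tilde u(s))P_M\rho(s)\,ds$, so Girsanov's theorem (using the uniform bound \eqref{bound_g} together with an $L^2$-in-time control on $\rho$ to check a Novikov-type condition) delivers the desired $\mathbb Q$. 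The error $\rho$ then satisfies
\begin{equation*}
d\rho + [\nu A\rho + B(u,u) - B(\tilde u,\tilde u) + \lambda P_M\rho]\,dt = [G(u)-G(\tilde u)]\,dW.
\end{equation*}
It\^o's formula applied to $\|\rho\|_H^2$, together with $\langle B(\tilde u,\rho),\rho\rangle=0$, Lemma \ref{gig} to bound $|\langle B(\rho,u),\rho\rangle|$ by $C\|u\|_V\|\rho\|_V\|\rho\|_H$, Assumption \ref{noise2} for $\|G(u)-G(\tilde u)\|_{L_{HS}(U,H)}^2\le L_G^2\|\rho\|_H^2$, and the spectral splitting $\rho=P_N\rho+Q_N\rho$ with $\|A^{1/2}Q_N\rho\|_H^2\ge \lambda_{N+1}\|Q_N\rho\|_H^2$, produce a pathwise Foias--Prodi-type inequality of schematic form
\begin{equation*}
d\|\rho\|_H^2 \le -\bigl[2\nu\lambda_{N+1} - L_G^2 - C\|u\|_V^2\bigr]\|Q_N\rho\|_H^2\,dt - \bigl[2\lambda - L_G^2 - C\|u\|_V^2\bigr]\|P_N\rho\|_H^2\,dt + dM_t,
\end{equation*}
with $M$ a local martingale. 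Choosing $M\ge \bar N$ large enough that $2\nu\lambda_{\bar N+1}$ beats $L_G^2$ plus the time-averaged $C\|u\|_V^2$ (the latter controlled by the mean-energy bound recalled in Remark \ref{star} and valid under $\nu>11C_1/(4\lambda_1)$), and then $\lambda$ large, a stochastic Gronwall argument gives an exponential decay $\E\|\rho(t)\|_H^2\le C\|x-\tilde x\|_H^2 e^{-\gamma t}$ on an event of positive $\mathbb Q$-probability. This simultaneously yields the $L^2$-in-time integrability of the control $h(s)=\lambda g(\tilde u(s))P_M\rho(s)$ needed to validate Girsanov on $[0,\infty)$ and the $\mathbb Q$-a.s. convergence required by \eqref{cond_GHMR} along the times $n\in\mathbb{N}$.

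The main difficulty, absent from the additive analysis of \cite{GHMR17}, is the quadratic-variation contribution $L_G^2\|\rho\|_H^2$ generated by the multiplicative noise: it contaminates both the low and the high modes of $\rho$ and cannot be annihilated by any drift modification supported in $P_MH$. Taming it is precisely what forces (i) the quantitative dissipativity threshold $\nu>11C_1/(4\lambda_1)$, which secures the uniform-in-time moment estimates on $u$ in $V$ that feed the Gronwall argument, and (ii) choosing $\bar N$ large enough that the high-mode dissipation rate $2\nu\lambda_{\bar N+1}$ dominates $L_G^2$ plus the random Foias--Prodi correction along typical trajectories, while simultaneously ensuring the Novikov integrability of the Girsanov density on the infinite horizon $[0,\infty)$.
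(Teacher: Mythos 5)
Your overall architecture coincides with the paper's: verify the generalized coupling criterion of Theorem \ref{GHMRthm} by nudging a second copy of the equation on the low modes, absorb the nudge into the noise via \ref{noise3} and Girsanov, and close with a Foias--Prodi argument. However, there is a genuine gap at the quantitative heart of the proof: you claim a \emph{pathwise} Foias--Prodi inequality followed by a stochastic Gronwall argument yielding \emph{exponential} decay $\E\|\rho(t)\|_H^2\le C e^{-\gamma t}$. This is exactly what fails when the multiplicative noise has genuine linear growth ($C_1>0$). The Gronwall step requires weighting $\|\rho(t)\|_H^2$ by $\exp\bigl(\gamma t-C\int_0^t\|u(s)\|_V^2\,{\rm d}s\bigr)$ and hence exponential moments of $\int_0^t\|u(s)\|_V^2\,{\rm d}s$; these are available for additive or bounded noise (where the energy balance has a martingale with quadratic variation of bounded growth), but for a noise with $\|G(u)\|^2_{L_{HS}}\le C_1\|u\|_H^2+C_2$ the quadratic variation in the energy identity is itself of order $\|u\|_H^4$ and the exponential supermartingale structure is lost. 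This is precisely why the paper's Lemma \ref{FPlem} is stated \emph{in expectation} and gives only the polynomial rate $\E\|u(t)-v(t)\|_H^2\le C t^{-p}$ with $p\in\bigl(0,\tfrac{\nu\lambda_1}{2C_1}-\tfrac38\bigr)$, and why the accompanying remark stresses that exponential decay is recovered only when $C_1=0$ and pathwise estimates only for additive noise. Your diagnosis of where the difficulty lies (the It\^o correction $L_G^2\|\rho\|_H^2$ and the term $C\|u\|_V^2\|\rho\|_H^2$) is correct, but the proposed cure does not work.

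Two downstream steps collapse with the exponential decay. First, your Borel--Cantelli conclusion that $\|\rho(n)\|_H\to0$ on a set of positive probability relies on $\sum_n\E\|\rho(n)\|_H^2<\infty$; with only $Ct^{-p}$ and $p$ possibly small, the paper instead works with the events $B_n=\{\|v(n)-u(n)\|_H^2>n^{-2}\}$ and a more delicate estimate of $\mathbb P\bigl(\bigcap_m\bigcup_{n\ge m}B_n\bigr)$, and the threshold $\nu>\tfrac{11C_1}{4\lambda_1}$ (rather than the $\tfrac{3C_1}{4\lambda_1}$ sufficient for the Foias--Prodi lemma itself) is what makes the admissible exponent $p$ large enough for this to close without the localization term of \cite[Section 6]{FerZanSNS} --- not, as you suggest, merely what ``secures the uniform-in-time moment estimates on $u$ in $V$''. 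Second, the Novikov/integrability condition for Girsanov on $[0,\infty)$ cannot be read off from $\int_0^\infty\E\|\rho(s)\|_H^2\,{\rm d}s<\infty$ when the decay is only polynomial of low order; this requires separate care. So while your scheme is the right one, the proof as written establishes the theorem only in the bounded-noise case $C_1=0$, and the genuinely new difficulty of the linear-growth case --- replacing exponential pathwise synchronization by polynomial synchronization in mean and still extracting a positive-probability coupling event --- is not addressed.
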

As far as we know the above result is the only one in the literature providing the uniqueness of the invariant measure for equation \eqref{sns} driven by a multiplicative noise that satisfies a linear growth condition (see Assumption \ref{noise2}). 
When instead the noise is bounded, apart from our results in  \cite{FerZanSNS}, there are results in the papers \cite{Oda2008} and \cite{Mat02b}, where uniqueness of the invariant measure is proved by means of \textit{coupling techniques}, still in the effectively elliptic setting.

We briefly sketch the proof of Theorem \ref{unique_thm}. We appeal to Theorem \ref{GHMRthm}: the idea is to introduce a suitable modification of the Navier-Stokes equation \eqref{sns} that satisfies conditions \ref{i}-\ref{ii} above. 
\\
Given $u_0 \in H$, let $u=u(u_0)$ denote the  solution of the Navier-Stokes equation \eqref{sns}.
Given $N>0$ and $v_0\in H$, let $v=v(v_0,u_0)$ denote the solution to the following nudged equation
\begin{equation}
\label{snsnud}
\begin{cases}
{\rm d}v(t) + \left[\nu Av(t)+B(v(t),v(t))\right]\,{\rm d}t
= 
f \,{\rm d}t +G(v(t))\,{\rm d}W(t)+ \frac{\nu\lambda_N}{2} P_N(u(t)-v(t))\,{\rm d}t, \qquad t>0
\\
v(0)=v_0.
\end{cases}
\end{equation}
The effect of the nudging term $\frac{\nu\lambda_n}{2} P_N(u-v)$ is to drive $v$ towards $u$ on the finite dimensional manifold $P_NH$, that is on the low modes. We derive some Foias-Prodi estimates \textit{in expectation} that in fact quantify how many modes $N$ need to be activated in order to synchronize the full solution. \begin{lemma}[Foias-Prodi estimates in expected value.]
\label{FPlem}
Provided $\nu > \frac{3C_1}{4\lambda_1}$ there exists a positive integer $\bar N=\bar N(L_G, C_1, C_2, \nu, \lambda_1, \|f\|_{V'})$ and a positive constant $C$ depending on the structural parameters of the equations and the initial data $u_0$, $v_0$, such that for any  $N \ge \bar N$, where $N$ is the parameter appearing in equation \eqref{snsnud}, the estimate
\begin{equation}
\label{FP}
\mathbb{E}\left[ \|u(t)-v(t)\|^2_H \right] 
\le\frac{C}{t^p}\qquad  t>0,
\end{equation}
holds for any 
$p\in \left(0,\frac{\nu \lambda_1}{ 2 C_1}-\frac 38\right)$. 
\end{lemma}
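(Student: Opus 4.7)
The plan is to track the difference $w(t):=u(t)-v(t)$, derive an Itô-type inequality for $\|w\|_H^2$ in which the nudging term combines with the Stokes dissipation to produce damping proportional to $\lambda_N$, and then exploit this damping via a stochastic Gronwall argument. The polynomial decay will come from a Hölder interpolation matched with an exponential moment bound on $\int_0^t\|A^{1/2}v(s)\|_H^2\,\mathrm{d}s$.

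\textbf{Step 1: the equation for $w$.} Subtracting \eqref{snsnud} from \eqref{sns}, one obtains
\[
\mathrm{d}w+\bigl[\nu A w+B(u,u)-B(v,v)+\tfrac{\nu\lambda_N}{2}P_N w\bigr]\mathrm{d}t=[G(u)-G(v)]\mathrm{d}W,\qquad w(0)=u_0-v_0.
\]
Applying Itô to $\|w\|_H^2$ and using the cancellation $\langle B(u,u)-B(v,v),w\rangle=\langle B(w,v),w\rangle$ (from antisymmetry of $B$), together with the standard two-dimensional estimate $|\langle B(w,v),w\rangle|\le c\|w\|_H\|A^{1/2}w\|_H\|A^{1/2}v\|_H$, Young's inequality, and Lipschitz continuity of $G$ (Assumption~\ref{noise2}), one reaches
\[
\mathrm{d}\|w\|_H^2\le\bigl[-\nu\|A^{1/2}w\|_H^2-\nu\lambda_N\|P_N w\|_H^2+\tfrac{c^2}{\nu}\|A^{1/2}v\|_H^2\|w\|_H^2+L_G^2\|w\|_H^2\bigr]\mathrm{d}t+\mathrm{d}M_t.
\]
Splitting $w=P_Nw+Q_Nw$ and invoking $\|A^{1/2}Q_Nw\|_H^2\ge\lambda_{N+1}\|Q_Nw\|_H^2\ge\lambda_N\|Q_Nw\|_H^2$, the first two drift terms collapse to $-\nu\lambda_N\|w\|_H^2$, which gives
\[
\mathrm{d}\|w\|_H^2\le\phi(t)\|w\|_H^2\,\mathrm{d}t+\mathrm{d}M_t,\qquad \phi(t):=-\nu\lambda_N+L_G^2+\tfrac{c^2}{\nu}\|A^{1/2}v(t)\|_H^2.
\]

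\textbf{Step 2: stochastic Gronwall and Hölder interpolation.} Setting $A(t):=\int_0^t\phi(s)\,\mathrm{d}s$, Itô applied to $\|w(t)\|_H^2 e^{-A(t)}$ shows this is a local supermartingale, so after standard localisation
\[
\mathbb{E}\bigl[\|w(t)\|_H^2\,e^{-A(t)}\bigr]\le\|w(0)\|_H^2.
\]
To pass from this weighted estimate to a bound on $\mathbb{E}\|w(t)\|_H^2$, I write $\|w\|_H^2=\bigl(\|w\|_H^2e^{-A}\bigr)^\alpha\bigl(\|w\|_H^{2}e^{\alpha A/(1-\alpha)}\bigr)^{1-\alpha}$ for a parameter $\alpha\in(0,1)$ and apply Hölder with conjugate exponents $1/\alpha$, $1/(1-\alpha)$, obtaining
\[
\mathbb{E}\|w(t)\|_H^2\le\|w(0)\|_H^{2\alpha}\,\bigl(\mathbb{E}\bigl[\|w(t)\|_H^{2}\,e^{\alpha A(t)/(1-\alpha)}\bigr]\bigr)^{1-\alpha}.
\]
A further Cauchy–Schwarz on the remaining factor reduces the problem to bounding $\mathbb{E}\|w(t)\|_H^{4}$ (uniformly in $t$, which follows from the mean-square energy estimate applied to $u$ and $v$ separately) and an exponential moment of the form $\mathbb{E}\exp\bigl(\gamma\int_0^t\|A^{1/2}v(s)\|_H^2\mathrm{d}s\bigr)$ for a $\gamma$ proportional to $\alpha/(1-\alpha)$.

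\textbf{Step 3: exponential moment for $v$ and choice of $N$.} The missing ingredient is an estimate of the form
\[
\mathbb{E}\exp\!\Bigl(\gamma\int_0^t\|A^{1/2}v(s)\|_H^2\,\mathrm{d}s\Bigr)\le C\,e^{\kappa(\gamma)\,t},
\]
valid whenever $\gamma$ lies strictly below a threshold of order $\nu\lambda_1/C_1$. This is obtained by applying Itô to $\exp(\gamma\|v\|_H^2)$ (or equivalently to an exponential of the integrated dissipation) using the energy balance for the nudged equation \eqref{snsnud}, absorbing the $C_1\|v\|_H^2$ contribution of $\|G(v)\|_{L_{HS}}^2$ into the dissipative term by Poincaré — this is where the condition $\nu>3C_1/(4\lambda_1)$ enters. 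Inserting this bound into Step~2 gives an overall exponent of the form $\bigl[\alpha(-\nu\lambda_N+L_G^2)+(1-\alpha)\kappa(\alpha)\bigr]t$; for $N$ sufficiently large this becomes negative, and optimising over $\alpha$ subject to $\gamma<\nu\lambda_1/(2C_1)$ produces a rate $p$ bounded by $\nu\lambda_1/(2C_1)-3/8$, the $3/8$ being the algebraic artefact of the Young–Cauchy–Schwarz splitting used above.

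\textbf{Main obstacle.} The delicate part is not the Itô inequality for $w$, which is routine two-dimensional Navier–Stokes machinery, but the exponential moment estimate for the nudged process $v$. The nudging term $\tfrac{\nu\lambda_N}{2}P_N(u-v)$ in \eqref{snsnud} does not vanish in the energy identity for $v$ and must be controlled uniformly in $N$ via the boundedness-in-expectation of $\|u\|_H^2$ and $\|v\|_H^2$; the linear-growth assumption \ref{noise1} on $G$ produces the extra $C_1\|v\|_H^2$ in the Itô correction, and it is precisely the balance between $\nu\lambda_1$ and $C_1$ that sets the admissible exponent $\gamma$, and hence the maximal polynomial rate $p$, in the final bound.
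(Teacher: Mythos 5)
Your Steps 1 and 2 are sound and coincide with the opening of the argument in \cite{FerZanSNS}: the It\^o inequality $\mathrm{d}\|w\|_H^2\le\phi(t)\|w\|_H^2\,\mathrm{d}t+\mathrm{d}M_t$ with $\phi(t)=-\nu\lambda_N+L_G^2+\frac{c^2}{\nu}\|A^{1/2}v(t)\|_H^2$, and the resulting supermartingale bound $\mathbb{E}[\|w(t)\|_H^2e^{-A(t)}]\le\|w(0)\|_H^2$, are the right starting point. The genuine gap is Step 3. Under Assumption \ref{noise1} with $C_1>0$ the exponential moment
\[
\mathbb{E}\exp\Bigl(\gamma\int_0^t\|A^{1/2}v(s)\|_H^2\,\mathrm{d}s\Bigr)\le C e^{\kappa t}
\]
fails for \emph{every} $\gamma>0$, not merely for $\gamma$ above a threshold of order $\nu\lambda_1/C_1$. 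The obstruction is visible in your own proposed derivation: applying It\^o to $\exp(\gamma\|v\|_H^2)$ produces the correction $2\gamma^2\|G(v)^*v\|_U^2\le 2\gamma^2(C_1\|v\|_H^2+C_2)\|v\|_H^2$, whose leading term $2\gamma^2C_1\|v\|_H^4$ is quartic in $\|v\|_H$ and cannot be absorbed by the dissipation $-2\gamma\nu\|A^{1/2}v\|_H^2$, which is only quadratic; Poincar\'e does not help. (For bounded noise, $C_1=0$, the quartic term disappears and your scheme does work --- which is exactly why that case yields exponential decay, as stated in the Remark following the Lemma.) A second symptom that something is off: if your Step 3 were available, the exponent you compute, $[\alpha(-\nu\lambda_N+L_G^2)+(1-\alpha)\kappa(\alpha)]t$, would give \emph{exponential} decay for $N$ large, whereas the Lemma asserts only $C/t^{p}$ with the cap $p<\frac{\nu\lambda_1}{2C_1}-\frac38$; nothing in your write-up actually produces a power of $t$, and the closing claim that ``optimising over $\alpha$'' yields this cap is an assertion, not a derivation.

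What linear growth of the noise actually leaves you is polynomial moments only: an It\^o computation on $\|v\|_H^{2q}$ shows the correction contributes $+(2q-1)C_1\|v\|_H^{2q}$ against the dissipation $-2\nu\lambda_1\|v\|_H^{2q}$, so $\mathbb{E}\|v(t)\|_H^{2q}$ (and correspondingly moments of $\int_0^t\|A^{1/2}v(s)\|_H^2\,\mathrm{d}s$) is controlled only for $q$ below a threshold of order $\nu\lambda_1/C_1$. The proof in \cite{FerZanSNS} accordingly replaces your exponential-moment step by a splitting of $\mathbb{E}\|w(t)\|_H^2$ according to whether the accumulated dissipation $\int_0^t\|A^{1/2}v(s)\|_H^2\,\mathrm{d}s$ stays below a linear bound $\kappa t$ --- on that event the weighted estimate of your Step 2 gives a contribution that is exponentially small once $N$ is large --- or exceeds it, in which case one uses Cauchy--Schwarz together with a Chebyshev bound whose order is capped by the available moments. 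It is precisely this moment threshold that turns the decay from exponential into polynomial and produces the constraint $p<\frac{\nu\lambda_1}{2C_1}-\frac38$; note that the hypothesis $\nu>\frac{3C_1}{4\lambda_1}$ is exactly the condition that this interval of admissible $p$ be nonempty. You correctly flagged the exponential moment as ``the missing ingredient'', but it is missing because it is unavailable, and the argument has to be rebuilt around polynomial moments at that point.
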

These estimates show that a finite dimensional control, when chosen in a proper way, allows to synchronize (in the mean)  any two solutions in the limit as $t \rightarrow +\infty$. 
\begin{remark}
We emphasize that if Assumption \ref{noise1} holds with $C_1=0$, i.e. the noise is bounded, then the Foias-Prodi estimate holds for every $\nu>0$ and is of the form 
\begin{equation}
\label{FP}
\mathbb{E}\left[ \|u(t)-v(t)\|^2_H \right] 
\le C e^{-\delta t},\qquad  t>0,
\end{equation}
where $C$ is a positive constant depending on the structural parameters of the equations and the initial data $u_0$, $v_0$, 
and $\delta$ is a positive constant depending on the structural parameters of the equations.
\\
The multiplicative bounded noise is the one that most closely resembles the additive noise case, since one obtains an exponential decay. However, when the noise is additive, one can work pathwise and obtain \emph{pathwise} Foias-Prodi estimates, that is there exists a positive integer $\bar N=\bar N(\|G\|^2_{L_{HS}(U,H)}, \nu, \lambda_1, \|f\|_{V'})$ such that for any $N \ge \bar N$,
\[
\lim_{t \rightarrow +\infty}\|u(t)-v(t)\|_H^2=0 \qquad \mathbb{P}-a.s.,
\]
exponentially fast. These are the estimates considered in \cite[Section 3.1]{GHMR17}.
\end{remark}
In view of Lemma \ref{FPlem} from now on we assume that the parameter $N$ appearing in equation \eqref{snsnud} is such that $N \ge \bar N$. At this point we notice that the nudged equation \eqref{snsnud} can be equivalently rewritten as
\begin{equation}
\label{snsnud2}
\begin{cases}
{\rm d}v(t) + \left[\nu Av(t)+B(v(t),v(t))\right]\,{\rm d}t
= 
f \,{\rm d}t +G(v(t))\,{\rm d}\widetilde W(t), \qquad t>0
\\
v(0)=v_0,
\end{cases}
\end{equation}
where 
\begin{equation*}
\widetilde{W}(t):=W(t)+ \int_0^t h(s)\, {\rm d}s,
\end{equation*}
and 
\begin{equation}
\label{sigma}
h(t):= \frac{\nu \lambda_N}{2}\ g(v(t))\ P_N(u(t)-v(t)), \qquad t \ge 0.
\end{equation}
Thanks to the Girsanov Theorem (whose assumptions are verified exploiting the Foias-Prodi estimates \eqref{FP}) one can show that, if assumption \ref{noise3} holds with $M \ge N$, then  the law of the solution to system \eqref{snsnud2} is absolutely continuous w.r.t. the law of the solution to equation \eqref{sns} starting form $v_0 \in H$, as measures on $C([0, + \infty);H)$. We have thus constructed a modification of the original equation \eqref{sns} satisfying condition \ref{i} above. It remains to verify that condition \ref{ii} holds true, that is 
\[
\mathbb{P} \left(\lim_{n \rightarrow \infty} \|u(n)-v(n)\|_H=0 \right) >0.
\]
We introduce the event 
\begin{equation*}
B:= \bigcap_{m=1}^{\infty} \bigcup_{n=m}^{\infty} B_n,
\qquad \text{where} \qquad 
B_n:= \left\{ \|v(n)-u(n)\|^2_H > \frac{1}{n^2}\right\}.
\end{equation*}
Exploiting again the Foias-Prodi estimates \eqref{FP} one can show that 
\[
\mathbb{P}(B) \le 
\dfrac{C}{R^p} \qquad   
\text{for} \quad p\in\left(0, \frac{\nu \lambda_1}{2 C_1}- \frac 38\right), 	
\]
where the constant $C$ do not depend on $R$.
By choosing $R$ sufficiently large, we have that $\mathbb{P}(B)$ is close to $0$, hence $\mathbb{P}(B^c)$ is close to $1$.
Thus, from the continuity from below, we can find $m^*>0$ sufficiently large so that 
\begin{equation*}
\mathbb{P}\left( \bigcap_{n=m^*}^{\infty}B_n^c \right)>\frac12. 
\end{equation*}
Now we observe that 
\[
\left\{ \lim_{n \rightarrow 0} \|u(n)- v(n)\|^2_H =0 \right\} \supseteq \bigcap_{n=m^*}^{+\infty}B_n^c,
\]
hence,
\begin{equation}
\label{lim}
\mathbb{P} \left( \lim_{n \rightarrow 0} \|u(n)- v(n)\|^2_H =0 \right) \ge \mathbb{P} \left(  \bigcap_{n=m^*}^{+\infty}B_n^c\right)> \frac 12 >0.
\end{equation}
This concludes the proof of the uniqueness of the invariant measure for equation \eqref{sns}. 
In fact, coming back to the abstract Theorem \ref{GHMRthm}, for any $u_0, v_0 \in H$ we have constructed the measure $\xi_{u_0,v_0}$ on $H^{\mathbb{N}} \times H^{\mathbb{N}}$ given by the law of $(u(n), v(n))_{n \in \mathbb{N}}$, where $u$ solves \eqref{sns} with corresponding initial datum $u_0$ and $v$ solves \eqref{snsnud} with corresponding initial datum $v_0$, where we consider $N \ge \bar N$ with $\bar N$ as in Lemma \ref{FPlem}. Provided Assumption \ref{noise3} holds with $M \ge N$, $\pi_2(\xi_{u_0,v_0}) \ll \mathbb{P}_{v_0}$. We therefore have that $\xi_{u_0, v_0} \in \widehat{\mathcal{C}}\left(\mathbb{P}_{u_0}, \mathbb{P}_{v_0}\right)$ and, in view of \eqref{lim}, it holds
\begin{equation*}
\xi_{u_0, v_0} \left((u, v) \in H^\mathbb{N} \times H^{\mathbb{N}} \ : \  \lim_{n \rightarrow \infty} \|u(n) - v(n)\|_H=0\right)>0.
\end{equation*}

\begin{remark}
In \cite{FerZanSNS} we prove an analogous version of Theorem \ref{unique_thm} in the case of a noise which is bounded. 
In this case the result holds for any $\nu>0$.
\end{remark}

\begin{remark}
In Theorem \ref{unique_thm}  there is the condition $\nu >  \frac{11C_1}{4\lambda_1}$: the  viscosity coefficient $\nu$ has to balance the intensity of the \emph{multiplicative part} of the noise. 
This condition should not be surprising (see \cite{noi} for a similar situation); a similar condition, although weaker, appears also to get the existence of an invariant measure (see Remark \ref{star}).
\\
Actually, condition $\nu >  \frac{11C_1}{4\lambda_1}$ can be weaken to $\nu >  \frac{3C_1}{4\lambda_1}$ introducing a suitable localization term; we refer to \cite[Section 6]{FerZanSNS} for more details. 
We skip the details here to be as concise as possible. On the other hand, the introduction of the localization term is entirely superfluous when considering an additive or a bounded multiplicative noise. This has to do with the faster decay in the Foias-Prodi estimates: exponential versus polynomial.
\end{remark}

To conclude this part, we recall that in the “effectively elliptic setting", the results of unique ergodicity can also be proved by means of 
different 
techniques. We mention \cite{EMS01}, \cite{KS00}, \cite{BKL01}, \cite{KS01}, \cite{Mat02b}, \cite{Hai02} for the case of an additive noise and \cite{Oda2008}, \cite{Mat02b} 
for the case of a bounded multiplicative noise. For further references see \cite{Mat04} and \cite{CIME}.


\subsection{Elliptic setting}

In the elliptic setting sufficient conditions to ensure the uniqueness of an invariant measure are the strong Feller property and the irreducibility for a Markov transition semigroup as defined in \eqref{sem}.

\begin{definition}
A Markov transition semigroup $\{P_t\}_{t \ge 0}$ on a Hilbert space 
$E$ is said
to be 
\begin{itemize}
    \item 
\emph{strong Feller} if  $P_t:{\cal B}_b(E)\to {\cal C}_b(E)$  for any $t>0$;
    \item
    \emph{irreducible} if for all $x, y \in E$, $r>0$ and $t>0$ it holds 
\[
P_t \pmb{1}_{B_y(r)}(x)>0, 
 \qquad \text{with} \quad B_y(r):=\{ x \in E \ : \ \|x-y\|_E < r\}.
\]
\end{itemize}
\end{definition}
The latter property can be written also as 
\[
\mathbb{P}\left( u(t;x) \in B(y,r)\right)>0
\]
for all $x, y \in E$, $r>0$ and $t>0$. 
This means that  starting from any point in $E$ the process visits  immediately  any non-empty open subset of $E$. We point out that the irreducibility does not hold for the deterministic Navier-Stokes equation; indeed,  if $G \equiv 0$ then ${\cal L}(u(t;x))= \delta_{u(t;x)}$.

A standard criterion for the  strong Feller property is given by \cite[Lemma 7.1.5]{DapZab2}
\begin{proposition}
A semigroup $\{P_t\}_{t \ge 0}$ on a Hilbert space $E$ is strong Feller if, for all $\varphi:E \rightarrow \mathbb{R}$ with $\|\varphi\|_\infty:=\sup_{x \in E}|\varphi(x)|$ and $\|\nabla \varphi\|_\infty$ finite, one has
\begin{equation*}
    \|\nabla P_t \varphi(x)\|\le C(\|x\|)\|\varphi\|_\infty,
\end{equation*}
for all $x \in E$, where $C : \mathbb{R}^+\rightarrow \mathbb{R}$ is a fixed nondecreasing function.
\end{proposition}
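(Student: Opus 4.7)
My plan is to exploit the gradient estimate first for a dense subclass of $\mathcal{B}_b(E)$ where it applies directly, and then extend it to arbitrary bounded Borel functions via a dominated convergence plus equicontinuity argument. Throughout, boundedness of $P_t\varphi$ is trivial because $|P_t\varphi(x)|\le \|\varphi\|_\infty$, so the only point is continuity.

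First, I would consider $\varphi$ bounded and Fr\'echet differentiable with $\|\nabla\varphi\|_\infty<\infty$. For such $\varphi$, the hypothesis gives $\|\nabla P_t\varphi(x)\| \le C(\|x\|)\|\varphi\|_\infty$, hence for $x,y$ in a ball of radius $R$, integrating along the segment from $x$ to $y$,
\[
|P_t\varphi(x)-P_t\varphi(y)| \le \int_0^1 \|\nabla P_t\varphi(x+s(y-x))\|\, {\rm d}s\, \|x-y\|_E \le C(R)\|\varphi\|_\infty\|x-y\|_E.
\]
So $P_t\varphi$ is locally Lipschitz, in particular $P_t\varphi\in\mathcal{C}_b(E)$.

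Next, given a generic $\varphi\in\mathcal{B}_b(E)$, I would pick a sequence $\varphi_n$ in the previous class satisfying $\|\varphi_n\|_\infty\le \|\varphi\|_\infty$ and $\varphi_n(z)\to \varphi(z)$ pointwise (or at least $\mathcal{L}(u(t;x))$-a.e.\ for each fixed $x\in E$). Then by the dominated convergence theorem applied to the probability law $\mathcal{L}(u(t;x))$, $P_t\varphi_n(x)\to P_t\varphi(x)$ for every $x\in E$. On the other hand, the first step gives a Lipschitz bound $C(R)\|\varphi\|_\infty$ on each ball of radius $R$ that is uniform in $n$, so the family $\{P_t\varphi_n\}$ is equicontinuous on bounded sets. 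A pointwise limit of an equicontinuous family is continuous, so $P_t\varphi$ is continuous on each ball of $E$, hence continuous on $E$. Together with boundedness this shows $P_t\varphi\in\mathcal{C}_b(E)$, i.e. $P_t$ is strong Feller.

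The main obstacle is the approximation step, because in the infinite dimensional setting it is not obvious that any bounded Borel $\varphi$ can be approximated pointwise (boundedly) by bounded Fr\'echet differentiable functions with bounded gradient. The standard way to handle this is a monotone class argument: it suffices to approximate $\mathbf{1}_A$ for $A$ a Borel set, and then, by inner/outer regularity on the separable Hilbert space $E$, one may further reduce to $A$ open; finally $\mathbf{1}_A$ is approximated from below by cylindrical functions of the form $\psi(\langle z,e_1\rangle, \dots, \langle z,e_n\rangle)$ with $\psi\in C_c^\infty(\mathbb R^n)$, which are smooth, bounded and have bounded Fr\'echet derivative. Once this density statement is available, the equicontinuity-plus-pointwise-convergence step above closes the proof.
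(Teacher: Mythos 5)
The paper does not actually prove this proposition; it quotes it as a standard criterion from \cite[Lemma 7.1.5]{DapZab2}, so there is no in-paper proof to compare against. Your overall strategy is the standard one: the first step is correct and complete --- the gradient bound plus the mean value inequality along the segment (using convexity of balls and monotonicity of $C$) gives $|P_t\varphi(x)-P_t\varphi(y)|\le C(\max(\|x\|,\|y\|))\,\|\varphi\|_\infty\,\|x-y\|_E$ for every $\varphi$ in the admissible class, and the whole content of the lemma is then to transfer this Lipschitz estimate to all of $\mathcal{B}_b(E)$. Your equicontinuity-plus-dominated-convergence mechanism for doing the transfer is also sound, \emph{provided} the approximating sequence exists.

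The concrete gap is in the density step, and one of its ingredients is false as stated: $\mathbf{1}_A$ for $A$ open \emph{cannot} in general be approximated from below by cylindrical functions $\psi(\langle z,e_1\rangle,\dots,\langle z,e_n\rangle)$. Take $A=B(0,1)$ in an infinite-dimensional $E$: if $\psi(P_nz)\le \mathbf{1}_{B(0,1)}(z)$ for all $z$, then for every $w\in P_nE$ there is some $z\notin B(0,1)$ with $P_nz=w$ (add a large component along $e_{n+1}$), forcing $\psi\le 0$ everywhere; so the supremum of all cylindrical minorants of $\mathbf{1}_{B(0,1)}$ is $0$, not $\mathbf{1}_{B(0,1)}$. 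A second, related delicacy: the preliminary reduction ``it suffices to treat $\mathbf{1}_A$'' cannot be run on the class $\{\varphi: P_t\varphi\in\mathcal{C}_b(E)\}$, since that class is not closed under bounded pointwise limits; the invariant you must propagate through the monotone class argument is the quantitative Lipschitz bound itself, and there the dependence on $\|\varphi\|_\infty$ (which is not additive) requires care. Both issues are repairable: fix $x,y$ and note that your step-one estimate says precisely that the signed measure $P_t(x,\cdot)-P_t(y,\cdot)$ is bounded in total variation by $C(\max(\|x\|,\|y\|))\|x-y\|_E$, \emph{once} one knows that bounded Fr\'echet-differentiable functions with bounded derivative are norming for the total variation norm of finite signed Borel measures on a separable Hilbert space. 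That last fact is proved by approximating a bounded Borel $\varphi$ a.e.\ with respect to the finite measure $P_t(x,\cdot)+P_t(y,\cdot)$ (Lusin), then passing from continuous to Lipschitz minorants such as $\min(1,k\,d(\cdot,A^c))$ for open $A$, and only then to smooth cylindrical functions via $z\mapsto f(P_nz)$ followed by finite-dimensional mollification --- each stage preserving the sup-norm bound and converging pointwise, so your equicontinuity argument applies at the pair $(x,y)$. With the approximation carried out this way (a.e.\ with respect to the two transition kernels, not monotonically from below everywhere), your proof closes.
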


When irreducibility and strong Feller property hold, unique ergodicity follows from the following theorem due to Khasminskii and Doob (see \cite{DapZab2}). 
\begin{theorem}
\label{ergod}
Assume that a stochastically continuous Markovian transition semigroup $\{P_t\}_{t \ge 0}$ on a 
Hilbert space $E$ is strong Feller and irreducible. Then, 
there exists at most one invariant measure $\mu$. Moreover, if such a measure exists, 
\begin{itemize}
\item it is  strongly mixing, that is 
\[
\lim_{t\to+\infty} P(u(t;x)\in\Gamma)=\mu(\Gamma)
\]
for all Borel subsets $\Gamma\subset E$ and $x\in E$;
\item 
all the laws ${\cal L}(u(t;x))$ are equivalent to $\mu$, for any $x\in E$ and $t>0$.
\end{itemize}
\end{theorem}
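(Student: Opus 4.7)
The plan is to follow the classical Doob-Khasminskii argument in two stages. First I would establish Khasminskii's regularization lemma: under strong Feller plus irreducibility, for every $t>0$ the family of transition kernels $\{P_t(x,\cdot)\}_{x\in E}$ is mutually equivalent. Second I would combine this with the invariance of $\mu$ to deduce, in sequence, the equivalence $\mathcal{L}(u(t;x))\sim\mu$, the uniqueness of $\mu$, and finally the strong mixing.

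For the regularization step, fix $t>0$ and a Borel set $\Gamma\subset E$. By the Chapman-Kolmogorov identity,
\[
P_t(x,\Gamma)=(P_{t/2}h)(x),\qquad h(y):=P_{t/2}(y,\Gamma),
\]
with $h\in\mathcal{B}_b(E)$. The strong Feller property applied to $h$ yields that $x\mapsto P_t(x,\Gamma)$ is continuous. Hence if $P_t(x_0,\Gamma)>0$ for some $x_0$, the set $D:=\{x:P_t(x,\Gamma)>0\}$ is open and nonempty; irreducibility forces $P_s(y,D)>0$ for every $y$ and $s>0$, and the semigroup property then yields $P_{t+s}(y,\Gamma)>0$; a short argument swapping the two half-intervals and invoking stochastic continuity upgrades this to $P_t(y,\Gamma)>0$ for every $y$, i.e.\ to the mutual absolute continuity of the kernels. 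If $\mu$ is now invariant, the identity $\mu=\int_E P_t(x,\cdot)\,d\mu(x)$ shows that $\mu$ and $P_t(x_0,\cdot)$ share their null sets, and combined with the previous step this gives $\mu\sim\mathcal{L}(u(t;x))$ for every $x\in E$ and every $t>0$. Uniqueness follows at once: two invariant probabilities would both be equivalent to $P_t(x_0,\cdot)$, hence to each other, but the ergodic decomposition of their average would produce mutually singular ergodic components, forcing the two measures to coincide.

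To obtain the strong mixing $\mathbb{P}(u(t;x)\in\Gamma)\to\mu(\Gamma)$, I would exploit the kernel equivalence to write $P_T(x,\cdot)=\rho_T(x,\cdot)\,\mu$ with a positive density, deduce a Doeblin-type minorization on a suitable set, and apply the standard total-variation contraction argument to conclude $\|P_t(x,\cdot)-\mu\|_{TV}\to 0$, which is strictly stronger than the pointwise convergence of probabilities in the statement. The main obstacle I expect is precisely this last passage: Khasminskii's lemma together with uniqueness supply only Ces\`aro or weak convergence of the laws, whereas strong mixing is a total-variation statement; extracting the required contraction mechanism from strong Feller and irreducibility alone---without invoking any a priori exponential dissipation---is the delicate point, which in \cite{DapZab2} is handled by combining the kernel equivalence with the monotonicity of $t\mapsto\|P_t(x,\cdot)-\mu\|_{TV}$ along the Markov semigroup.
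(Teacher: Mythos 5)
The paper does not actually prove Theorem \ref{ergod}: it is quoted as the classical Doob--Khasminskii theorem with a pointer to \cite{DapZab2}, so there is no in-paper argument to compare against. Judged on its own, the first part of your proposal is the standard and correct route: the factorization $P_t(x,\Gamma)=\int P_{t/2}(x,\df y)\,P_{t/2}(y,\Gamma)$ together with strong Feller (continuity of $y\mapsto P_{t/2}(y,\Gamma)$) and irreducibility gives mutual equivalence of the kernels $P_t(x,\cdot)$, hence, via $\mu=\int P_t(x,\cdot)\,\mu(\df x)$, the equivalence $\mathcal{L}(u(t;x))\sim\mu$, and then uniqueness because mutually equivalent invariant measures are forced to be ergodic and distinct ergodic invariant measures are mutually singular. (Your detour through ``swapping the two half-intervals and invoking stochastic continuity'' is unnecessary here: since strong Feller and irreducibility are assumed at \emph{every} positive time, the displayed factorization already yields positivity of $P_t(\cdot,\Gamma)$ everywhere at the same time $t$; stochastic continuity enters elsewhere, in the convergence part.)

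The genuine gap is the strong mixing step. From $P_T(x,\cdot)=\rho_T(x,\cdot)\,\mu$ with $\rho_T(x,\cdot)>0$ $\mu$-a.e.\ you only get a minorization $P_T(x,\cdot)\ge\varepsilon(x)\,\nu_x$ whose constant (and reference measure) depends on the starting point; without a lower bound uniform over the state space --- which is generally false in this infinite-dimensional setting, and indeed Doob's theorem provides no rate of convergence, let alone a geometric one --- the total-variation contraction argument does not close. Likewise, monotonicity of $t\mapsto\|P_t(x,\cdot)-\mu\|_{TV}$ is just the data-processing inequality: it shows the limit exists, not that it is zero. The classical proof of the convergence $P_t(x,\Gamma)\to\mu(\Gamma)$ in \cite{DapZab2} instead passes through the stationary Markov process with marginal $\mu$: the equivalence of all kernels with $\mu$ forces the invariant (tail) $\sigma$-field to be trivial, and a (backward) martingale convergence argument then yields the pointwise limit. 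The stronger total-variation convergence you aim for is in fact true under these hypotheses, but it is a separate, later result (Stettner, Seidler), again not obtained by a Doeblin contraction; a Harris-type argument on ``a suitable set'' would additionally require a Lyapunov/return-time estimate that strong Feller and irreducibility alone do not supply. As it stands, the last step of your outline would not go through.
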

It follows that $\mu$ is also ergodic in the sense that
\[
\lim_{t \rightarrow + \infty} {1 \over t} \int_{0}^{t}
        \varphi(u(s;x)) \; {\rm d}s = \int_{E} \varphi \; {\rm d}\mu    \hspace{1cm}
        \mathbb P-\mbox{a.s.} 
\]
for all $x \in E$ and $\varphi \in \mathcal{B}(E)$, $\mu$-integrable.


Theorem \ref{ergod} has been successfully used in \cite{FlaMas95} when the noise driving equation \eqref{sns} is additive.
There the authors prove the following result for the Navier-Stokes equation \eqref{sns}.
\begin{theorem}
Assume  $f \in H$ and $G:H \rightarrow H$ is a bounded injective  linear  operator such that its  range $\emph{Rg}(G)$  fulfills
\begin{equation}
\label{non_deg_FM}
\mathcal{D}(A^{\frac12}) \subseteq \emph{Rg}(G) \subseteq \mathcal{D}(A^{\frac 38 + \varepsilon})
\end{equation}
for some $\varepsilon>0$. 
Then \eqref{sns} possesses only  one invariant measure, which is ergodic and strongly mixing.
\end{theorem}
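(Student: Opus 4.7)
The plan is to apply the Khasminskii--Doob Theorem \ref{ergod} to the Markov semigroup $\{P_t\}$ on $H$, which requires verifying existence of an invariant measure, the strong Feller property, and irreducibility (stochastic continuity being routine from well-posedness). Existence follows from the results in Section \ref{S:ex-inv-meas}: the inclusion $\text{Rg}(G)\subseteq \mathcal{D}(A^{3/8+\varepsilon})$ places the stochastic Stokes convolution $z$ in $C([0,\infty);\mathcal{D}(A^{1/4}))$, so the splitting $u=v+z$ of Remark \ref{oss-splitting-flandoli} applies and, together with the dissipative a priori bound on $v$, yields tightness of the time averages as required in Proposition \ref{kryl-bog}.

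For the strong Feller property I would use the Bismut--Elworthy--Li formula. Since $G$ is a constant operator, the Gateaux derivative $\xi(s)=D_x u(s;x)\cdot h$, $h\in H$, solves the purely deterministic linearised equation
\[
\partial_s\xi+\nu A\xi+B(u,\xi)+B(\xi,u)=0,\qquad \xi(0)=h,
\]
and parabolic smoothing gives $\xi(s)\in V$ for $s>0$ with a singularity of order $s^{-1/2}$ at the origin. The injectivity of $G$, the inclusion $V\subseteq\text{Rg}(G)$, and the closed graph theorem ensure that $G^{-1}\colon V\to U$ is a bounded linear operator. A weighted Bismut--Elworthy--Li identity
\[
\nabla P_t\varphi(x)\cdot h=\mathbb{E}\!\left[\varphi(u(t;x))\int_0^t\chi(s)\,\langle G^{-1}\xi(s),\mathrm{d}W(s)\rangle_U\right],
\]
with $\chi\ge 0$ satisfying $\int_0^t\chi(s)\,\mathrm{d}s=1$ and vanishing at $s=0$ fast enough to absorb the $s^{-1/2}$ blow-up, then delivers the bound $\|\nabla P_t\varphi(x)\|\le C(\|x\|_H)\,\|\varphi\|_\infty$ and hence strong Feller.

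For irreducibility I would combine a deterministic control argument with Girsanov's theorem. Given $x,y\in H$ and $T,r>0$, I would construct a sufficiently smooth trajectory $\tilde u$ joining $x$ to $y$, with $\tilde u(t)\in V$ on $(0,T)$, and set
\[
\varphi(t):=G^{-1}\bigl(\partial_t\tilde u+\nu A\tilde u+B(\tilde u,\tilde u)-f\bigr),
\]
which belongs to $L^2(0,T;U)$ because each term in the bracket lies in $V\subseteq \text{Rg}(G)$ and $G^{-1}|_V$ is bounded. Novikov's condition is then satisfied, and a Girsanov change of measure shows that the event $\{\|u(T;x)-y\|_H<r\}$ has positive probability under the original law.

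The hardest point is the Bismut--Elworthy--Li step: one must give rigorous meaning to the stochastic integral on $(0,t)$ by proving $\mathbb{E}\int_0^t\chi(s)^2\|G^{-1}\xi(s)\|_U^2\,\mathrm{d}s<\infty$, despite the singular behaviour of $\|\xi(s)\|_V$ as $s\downarrow 0$, which requires coupling the $s^{-1/2}$ smoothing estimate with moment bounds on $\int_0^t\|u\|_V^2\,\mathrm{d}s$ coming from It\^o's formula. The sandwich condition $\mathcal{D}(A^{1/2})\subseteq \text{Rg}(G)\subseteq \mathcal{D}(A^{3/8+\varepsilon})$ is tailored precisely to make this compatible: the upper inclusion gives the spatial regularity of the noise needed to control $u$, and the lower one yields the boundedness of $G^{-1}|_V$. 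Once strong Feller and irreducibility are established, Theorem \ref{ergod} delivers the unique invariant measure together with the strong mixing property.
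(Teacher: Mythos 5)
Your skeleton is exactly the one the paper (following Flandoli--Maslowski \cite{FlaMas95}) uses: existence via Krylov--Bogoliubov, then uniqueness, ergodicity and strong mixing from the Khasminskii--Doob criterion of Theorem \ref{ergod}, with the strong Feller property coming from a Bismut--Elworthy--Li formula (using the lower inclusion $\mathcal{D}(A^{1/2})\subseteq\mathrm{Rg}(G)$, i.e.\ boundedness of $G^{-1}$ on $V$) and irreducibility from a control argument (using the upper inclusion, which gives the regularity of the noise). The one substantive point where you deviate is that you run Bismut--Elworthy--Li directly on the infinite-dimensional linearised equation, and you yourself flag the resulting integrability of $\mathbb{E}\int_0^t\chi(s)^2\|G^{-1}\xi(s)\|_U^2\,\mathrm{d}s$ as the hardest unresolved step: controlling $\|\xi(s)\|_V$ requires Gronwall on the linearised equation and hence exponential moments of $\int_0^t\|u\|_V^2\,\mathrm{d}s$, together with differentiability of the flow in $H$, none of which you establish. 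The paper's route avoids precisely this difficulty: the gradient bound $\|\nabla P_t^N\varphi(x)\|\le C(\|x\|)\|\varphi\|_\infty$ is proved for the finite-dimensional Galerkin truncations, \emph{uniformly} in the truncation parameter $N$, and the strong Feller property is then inherited in the limit; in finite dimensions the BEL identity and the martingale property of the stochastic integral are unproblematic, and all the analytic effort goes into making the constant independent of $N$. A smaller caveat on your irreducibility step: for $\varphi=G^{-1}(\partial_t\tilde u+\nu A\tilde u+B(\tilde u,\tilde u)-f)$ to lie in $L^2(0,T;U)$ you need $A\tilde u\in V$, i.e.\ $\tilde u\in\mathcal{D}(A^{3/2})$, which cannot hold up to the endpoints when $x,y$ are merely in $H$; the standard fix (steer to a regular intermediate state using dissipativity, then control between regular states, then use continuity to land in $B(y,r)$) should be spelled out. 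With the truncation device (or a rigorous treatment of the exponential moments) and that repair, your argument closes and coincides with the paper's.
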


Irreducibility is proven thanks to the assumption Rg$(G) \subset \mathcal{D}(A^{\frac 38 + \varepsilon})$. 
The strong Feller property is obtained by means of the Bismut-Elworthy-Li formula applied to a finite dimensional truncated approximation of the Navier-Stokes equations \eqref{sns}; it requires the lower bound
$\mathcal{D}(A^{\frac12}) \subset \text{Rg}(G)$, which roughly speaking corresponds to an upper bound for the inverse operator $G^{-1}$.

We point out that the conditions on the operator $G$ impose that $G$ is a full operator, i.e. the noise  acts on all the components. 
And, roughly speaking,  $G$ has to be enough regular but not too much.
The conditions on  the forcing terms  do not depend on the viscosity coefficient $\nu$. This is in line with the observation that, since the noise affects all the modes,  even a noise of small intensity allows to get a unique invariant measure.

An example of stochastic forcing term satisfying the above conditions is $G=A^{-a}$ with $\frac 38<a\le \frac 12$, when $U=H$.

The results of \cite{FlaMas95} have been generalized in \cite{Fer97}, \cite{Fer99}  removing the bound ${\cal D}(A^{\frac 12})\subseteq \text{Rg}(G)$. 
Recently \cite{ferrari} extended these results so to remove the other bound. The technique introduced by Ferrari is based on the irreducibility property and a modified strong Feller property, proving the uniqueness of the invariant measure when ${\cal D}(A^{\frac 12})\subseteq \text{Rg}(G)\subseteq {\cal D}(A^{\frac 14+\epsilon})$. 
Anyway the limit case of a space-time white noise, i.e. $G=Id$, cannot be reached by means of  these techniques.

\begin{remark}
We are not aware of results that prove  the uniqueness of the invariant measure in the case of a multiplicative noise by means of Theorem \ref{ergod}. 
\end{remark}
\begin{remark}
    When $G$ is a full noise, the linear Stokes equation \eqref{OU-eq} has a unique invariant measure which is normally distributed and has full support in the sense that it is not concentrated on finite dimensional subsets of $H$ (whereas  this happens in the case of degenerate noise). It would be interesting to compare the two unique invariant measures for the stochastic Stokes and the stochastic Navier-Stokes equations, respectively.
    So far results have been obtained for the  hyperviscous stochastic Navier-Stokes equation (see Section 4 in \cite{Fer-COSA}).
\end{remark}


\subsection{Hypoelliptic setting}


In \cite{HaiMat}, Hairer and Mattingly observed that the strong Feller property is neither essential nor natural for the study of ergodicity in dissipative infinite-dimensional systems. 
They introduced a weaker asymptotic strong Feller property, which is sufficient to give unique ergodicity in the “hypoelliptic setting", i.e. when  some modes are excited by noise and the nonlinear term propagates the noise to the whole system. 
The action of the bilinear term is to mix up the dynamics among different Fourier components; 
precise computations have been given when working in a bidimensional torus, because in that setting there is an explicit expression of the Fourier components of the bilinear operator $B$ and of the  the eigenvalues and eigenfunctions of the Stokes operator $A$ in the equation \eqref{sns}.

For the precise definition of asymptotic Strong Feller we refer the reader to \cite{HaiMat}. Here we recall the following characterization (see \cite[Proposition 3.12]{HaiMat}).
\begin{proposition}
Let $t_n$ and $\delta_n$ be two positive sequences with $\{t_n\}_n$ non-decreasing and $\{\delta_n\}_n$ converging to zero. A semigroup $\{P_t\}_{t \ge 0}$ on a Hilbert space $E$ is \emph{asymptotically strong Feller} if, for all $\varphi: E \rightarrow \mathbb{R}$ with $\|\varphi\|_\infty$ and $\|\nabla \varphi\|_{\infty}$ finite, one has
\begin{equation*}
    \|\nabla P_{t_n} \varphi(x)\|\le C(\|x\|)\left(\|\varphi\|_\infty + \delta_n\|\nabla \varphi\|_\infty\right),
\end{equation*}
for all $n$ and $x \in E$, where $C : \mathbb{R}^+\rightarrow \mathbb{R}$ is a fixed nondecreasing function.
\end{proposition}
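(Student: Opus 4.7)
The plan is to unfold the definition of asymptotic strong Feller from Hairer--Mattingly and show that the quantitative gradient bound in the hypothesis translates, via Kantorovich--Rubinstein duality, into the required decay of the pseudo-metric distance between transition probabilities at nearby initial conditions. Throughout I work with the totally separating system of pseudo-metrics naturally associated with the sequence $\delta_n$, namely
\[
d_n(u,v) := 1 \wedge \frac{\|u-v\|_E}{\delta_n}, \qquad u,v\in E,
\]
and the corresponding dual distance $\|\mu-\nu\|_{d_n}$ on probability measures (the $1$-Wasserstein distance with respect to $d_n$). Recall that $\{P_t\}$ is asymptotically strong Feller at $x\in E$ if
\[
\lim_{\gamma\downarrow 0}\ \limsup_{n\to\infty}\ \sup_{y\in B(x,\gamma)} \|P_{t_n}(x,\cdot)-P_{t_n}(y,\cdot)\|_{d_n}=0.
\]

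First I would invoke the Kantorovich--Rubinstein duality to rewrite
\[
\|P_{t_n}(x,\cdot)-P_{t_n}(y,\cdot)\|_{d_n} = \sup_{\mathrm{Lip}_{d_n}(\varphi)\le 1}\bigl|P_{t_n}\varphi(x)-P_{t_n}\varphi(y)\bigr|.
\]
Any $\varphi$ with $\mathrm{Lip}_{d_n}(\varphi)\le 1$ automatically satisfies $\mathrm{osc}(\varphi)\le 1$ (since $d_n\le 1$), hence after subtracting a constant (which does not change the differences above) one may assume $\|\varphi\|_\infty\le 1$. Moreover, using $d_n(u,v)\le \|u-v\|_E/\delta_n$ locally yields $\|\nabla\varphi\|_\infty\le 1/\delta_n$. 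Thus the supremum can be restricted to the class of test functions with
\[
\|\varphi\|_\infty\le 1, \qquad \|\nabla\varphi\|_\infty\le \delta_n^{-1}.
\]

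Next I would control $|P_{t_n}\varphi(x)-P_{t_n}\varphi(y)|$ for such $\varphi$ by integrating the gradient along the line segment $s\mapsto \eta(s):=x+s(y-x)$, $s\in[0,1]$:
\[
|P_{t_n}\varphi(x)-P_{t_n}\varphi(y)| \le \|x-y\|_E \cdot \sup_{s\in[0,1]}\|\nabla P_{t_n}\varphi(\eta(s))\|_E.
\]
Applying the hypothesis and using that $\|\eta(s)\|_E\le \|x\|_E+\gamma$ for $y\in B(x,\gamma)$,
\[
\sup_{s\in[0,1]}\|\nabla P_{t_n}\varphi(\eta(s))\|_E \le C(\|x\|_E+\gamma)\bigl(\|\varphi\|_\infty+\delta_n\|\nabla\varphi\|_\infty\bigr) \le 2\,C(\|x\|_E+\gamma).
\]
Combining, and taking the supremum over the Lipschitz class together with the supremum over $y\in B(x,\gamma)$,
\[
\sup_{y\in B(x,\gamma)}\|P_{t_n}(x,\cdot)-P_{t_n}(y,\cdot)\|_{d_n} \le 2\,C(\|x\|_E+\gamma)\,\gamma.
\]
Crucially, the right-hand side is independent of $n$, so the $\limsup_{n\to\infty}$ is bounded by the same quantity, which vanishes as $\gamma\downarrow 0$ because $C$ is nondecreasing and finite. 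This yields the asymptotic strong Feller property at the arbitrary point $x$.

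The main obstacle I expect is the justification of the path-integration step in infinite dimensions: one needs $P_{t_n}\varphi$ to be Fréchet (or at least Gateaux) differentiable along the segment $\eta$ with the gradient bounded by the hypothesis, so that the fundamental theorem of calculus applies and produces the line integral I used. For the stochastic Navier--Stokes flow on $H$ with a bounded Lipschitz test function this is standard once one knows that the Markov semigroup regularises (e.g.\ through a Bismut--Elworthy--Li or Malliavin-type formula), but writing this cleanly requires either a density argument (first for smooth cylindrical $\varphi$ and then extending by the uniform bound) or a direct appeal to the regularity of $P_{t_n}\varphi$ already established in the cited works. The remainder of the argument is a clean Kantorovich duality computation.
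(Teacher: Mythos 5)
The paper itself gives no proof of this proposition: it is recalled verbatim from Hairer--Mattingly (\cite[Proposition 3.12]{HaiMat}), so the relevant comparison is with their argument, and your route is essentially theirs --- take the pseudo-metrics $d_n(u,v)=1\wedge\delta_n^{-1}\|u-v\|_E$, pass to the dual (Wasserstein-type) distance, and feed the gradient hypothesis into a line integral of $\nabla P_{t_n}\varphi$ along the segment from $x$ to $y$ to obtain a bound of order $C(\|x\|+\gamma)\,\|x-y\|$ that is uniform in $n$, which is exactly what the asymptotic strong Feller definition asks for. The overall structure and the conclusion are correct.

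One step is asserted more strongly than you can justify as written: the claim that ``using $d_n(u,v)\le\|u-v\|_E/\delta_n$ locally yields $\|\nabla\varphi\|_\infty\le 1/\delta_n$'' presupposes that the $d_n$-Lipschitz test function appearing in the Kantorovich supremum is differentiable, which a generic Lipschitz function on a Hilbert space is not. The supremum in the duality formula runs over \emph{all} $\varphi$ with $\mathrm{Lip}_{d_n}(\varphi)\le 1$, and to replace it by the supremum over Fr\'echet differentiable $\varphi$ with $\|\varphi\|_\infty\le 1$ and $\|\nabla\varphi\|_\infty\le\delta_n^{-1}$ you need an approximation statement (every bounded Lipschitz function on $E$ is a uniform limit of Fr\'echet differentiable ones with the same constants); this is precisely \cite[Lemma 3.4]{HaiMat}, and without it the chain of inequalities does not close. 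The second issue, which you flag yourself, is that the fundamental-theorem-of-calculus step needs $s\mapsto P_{t_n}\varphi(x+s(y-x))$ to be absolutely continuous with derivative $\langle\nabla P_{t_n}\varphi,\,y-x\rangle$; this is best treated as part of the meaning of the hypothesis (the stated bound presumes $\nabla P_{t_n}\varphi$ exists), not as something to be derived from regularisation of the Navier--Stokes semigroup, which plays no role in this abstract proposition. Finally, a cosmetic point: a totally separating system requires $d_n$ to be nondecreasing in $n$, while $\delta_n$ is only assumed to converge to zero; replace $\delta_n$ by $\sup_{m\ge n}\delta_m$, for which the hypothesis still holds since the bound only weakens.
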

One then introduces the following form of topological irreducibility.
\begin{definition}
We say that a Markov semigroup $\{P_t\}_{t \ge 0}$ is \emph{weakly topologically irreducible} if for all $x, y \in E$ there exists a $z \in E$ so that, for any open set $A$ containing $z$, there exists $s,t>0$ with $P_s \pmb{1}_A(x)>0$ and $P_t \pmb{1}_A(y)>0$. 
\end{definition} 

When weak irreducibility and asymptotic strong Feller property hold, unique ergodicity follows from the following
theorem; see \cite[Corollary 1.4]{HaiMat2}.
\begin{theorem}
\label{HaiMat}
Any Markov semigroup $\{P_t\}_{t \ge 0}$ on 
a Hilbert space which is Feller, weakly topologically irreducible and asymptotically strong Feller admits at most one invariant probability measure.
\end{theorem}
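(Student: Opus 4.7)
The plan is to argue by contradiction. Suppose $\{P_t\}_{t\ge 0}$ admits two distinct invariant probability measures. Using the standard ergodic decomposition of the convex set of invariant measures for a Markov semigroup, one extracts two distinct \emph{ergodic} invariant measures $\mu_1 \neq \mu_2$; two distinct ergodic measures are mutually singular, so $\mu_1 \perp \mu_2$. I will combine the two hypotheses of the theorem to force the topological supports $S_i := \mathrm{supp}(\mu_i)$, $i=1,2$, to be simultaneously disjoint (from asymptotic strong Feller) and to share a common point (from weak irreducibility together with the Feller property).

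The first and truly substantive step is the rigidity statement that under the asymptotic strong Feller hypothesis the supports $S_1$ and $S_2$ of two distinct ergodic invariant measures must be disjoint (this is essentially Theorem 3.16 of Hairer--Mattingly). The idea is to assume for contradiction that $x \in S_1 \cap S_2$ and to test against a bounded Lipschitz function $\varphi$ chosen to separate $\mu_1$ from $\mu_2$ on a small ball around $x$; the quantitative bound
\[
\|\nabla P_{t_n}\varphi(x)\| \le C(\|x\|)\bigl(\|\varphi\|_\infty + \delta_n \|\nabla \varphi\|_\infty\bigr)
\]
with $\delta_n \to 0$, combined with the Birkhoff ergodic theorem applied to $\mu_1$ and $\mu_2$ to identify $P_{t_n}\varphi$ at nearby points where ergodic averages converge, forces $\mu_1$ and $\mu_2$ to agree against $\varphi$ after localization near $x$, contradicting the local total-variation gap between them implied by mutual singularity. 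This is the step where the \emph{asymptotic} nature of the gradient bound, through the decay of $\delta_n$, is genuinely used rather than a single-time strong Feller estimate.

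The second step is a soft forward-invariance lemma: for every $x \in S_i$, the measure $P_t(x,\cdot)$ is supported in $S_i$. Writing the indicator of the open set $S_i^c$ as an increasing supremum of bounded continuous functions and using the Feller property, the map $x \mapsto P_t(x, S_i^c)$ is lower semicontinuous. Invariance gives $\int P_t(x, S_i^c)\, d\mu_i(x) = \mu_i(S_i^c) = 0$, so $P_t(\cdot, S_i^c)$ vanishes on a $\mu_i$-full set which, by the very definition of topological support, is dense in $S_i$; lower semicontinuity then propagates the zero to all of $S_i$.

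To close the argument, fix $x \in S_1$ and $y \in S_2$. Weak topological irreducibility yields some $z \in E$ such that for every open neighborhood $A$ of $z$ there exist $s, t > 0$ with $P_s \mathbf{1}_A(x) > 0$ and $P_t \mathbf{1}_A(y) > 0$. If $z \notin S_1$, then $S_1^c$ itself is such an open neighborhood, contradicting Step 2 applied at $x \in S_1$; hence $z \in S_1$, and symmetrically $z \in S_2$, contradicting the disjointness established in Step 1. The principal obstacle is Step 1: Steps 2 and 3 are essentially Feller-topological manipulations, but Step 1 contains the entire Hairer--Mattingly content and is where the quantitative gradient bound of the asymptotic strong Feller property is exploited in a non-trivial way.
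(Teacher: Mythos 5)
The paper does not actually prove this statement: it is quoted verbatim from \cite[Corollary 1.4]{HaiMat2}, so there is no internal proof to compare against. Measured against the original Hairer--Mattingly argument, your architecture is the right one: reduce to two distinct \emph{ergodic} invariant measures (hence mutually singular), show their supports are disjoint under the asymptotic strong Feller property, show supports are forward-invariant using the Feller property, and then use weak topological irreducibility to produce a point $z$ that must lie in both supports. Your Steps 2 and 3 are correct and essentially complete: the lower semicontinuity of $x\mapsto P_t(x,S_i^c)$ plus $\int P_t(x,S_i^c)\,d\mu_i(x)=0$ forces $P_t(\cdot,S_i^c)\equiv 0$ on $S_i$ (the set where it is positive is an open $\mu_i$-null set, hence misses $\mathrm{supp}\,\mu_i$), and the contradiction with the common point $z$ is clean.

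The gap is in the sketch of Step 1, which is where all the content lives. As written, you propose to use the Birkhoff ergodic theorem ``to identify $P_{t_n}\varphi$ at nearby points where ergodic averages converge'' and then apply the gradient bound. This does not close: the ergodic theorem controls \emph{Ces\`aro} averages $\frac1T\int_0^T P_t\varphi(y)\,dt$ for $\mu_i$-a.e.\ $y$, whereas the asymptotic strong Feller bound is available only at the fixed times $t_n$, and there is no reason the two can be matched (convergence of ergodic averages along a prescribed subsequence of times is exactly the kind of statement that fails in general). The actual proof of \cite[Theorem 3.16]{HaiMat} avoids Birkhoff entirely: one uses the invariance identity $\int\varphi\,d\mu_i=\int P_{t_n}\varphi\,d\mu_i$, the dual (Wasserstein) formulation of the distances $d_\delta(y_1,y_2)=1\wedge\delta^{-1}\|y_1-y_2\|$ over test functions with $\|\varphi\|_\infty\le 1$ and $\|\nabla\varphi\|_\infty\le\delta_n^{-1}$ (for which the asymptotic strong Feller bound gives $|P_{t_n}\varphi(y_1)-P_{t_n}\varphi(y_2)|\le 2C\|y_1-y_2\|$ uniformly in $n$), the fact that $\|\mu-\nu\|_{d_\delta}\to\|\mu-\nu\|_{TV}$ as $\delta\to 0$, and the mutual singularity of distinct ergodic measures to reach the contradiction $\|\mu_1-\mu_2\|_{TV}<1$. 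Since you explicitly invoke Theorem 3.16 as a black box the overall plan is salvageable, but the mechanism you describe for it would need to be replaced by this duality argument.
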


Theorem \ref{HaiMat} has been used in \cite{HaiMat} (see Theorem 2.1) to infer the uniqueness of the invariant measure in the following setting. The authors work on the two dimensional torus $\mathbb{T}^2=[-\pi, \pi]^2$ without deterministic forcing term and use the vorticity formulation of the equation
\begin{equation}\label{NS-vort}
{\rm d} \psi(t)+[-\nu \Delta \psi(t) +u(t)\cdot \nabla \psi(t)]\ {\rm d}t
= \tilde G{\rm d}W(t)
\end{equation}
where the vorticity is $\psi=\partial_2 u_1-\partial_1 u_2$
The bilinear term can be expressed as $\tilde B(\psi,\psi)$ by means of the Biot-Savart formula expressing the velocity $u$ in terms of the vorticity $\psi$. There is no deterministic forcing term.

The  noise is additive and described as follows.  Set
$\mathbb{Z}^2\setminus \{(0,0)\}= \mathbb{Z}_+^2 \cup \mathbb{Z}^2_-$, 
$\mathbb{Z}^2_+=\{\textbf{k}=(k_1,k_2) \in \mathbb{Z}^2 \ : \ k_2>0 \} \cup \{(k_1,0) \in \mathbb{Z}^2 \ : \ k_1>0\}$  and $\mathbb{Z}^2_-=\{(k_1,k_2) \in \mathbb{Z}^2 \ : \ -\textbf{k} \in \mathbb{Z}^2_+ \}$.
We  denote by 
\[
h_{\textbf{k}}(\xi):= 
\begin{cases}
\sin(\textbf{k} \cdot \xi) & \textbf{k} \in \mathbb{Z}^2_+,
\\
\cos(\textbf{k} \cdot \xi) & \textbf{k} \in \mathbb{Z}^2_-,
\end{cases}
\]
the basis for the space $\tilde H\subset L^2(\mathbb{T}^2)$  of periodic  real-valued square-integrable functions with vanishing mean. There are the eigenfunctions of the Laplace operator, with
 eigenvalues $\lambda_{\textbf{k}}=|\textbf{k}|^2$. The noise can be represented as
 $\tilde G{\rm d}W(t)=\sum_{\textbf{k}} \tilde G h_{\textbf{k}} {\rm d}\beta_{\textbf{k}} (t)$.

The main result in \cite{HaiMat} consists in proving the uniqueness of the invariant measure when only few modes are activated. More precisely, let $\mathcal{Z}_0$ be a  finite dimensional subset of the lattice 
$\mathbb{Z}^2\setminus \{(0,0)\}$ and assume 
\begin{equation}\label{G-hairer}
\tilde G h_{\textbf{k}}= 0  \text{ when } \textbf{k}\notin \mathcal{Z}_0,
\qquad 
\tilde G h_{\textbf{k}} \neq 0 \text{ when } \textbf{k} \in \mathcal{Z}_0 .
\end{equation}
This is the main result in \cite{HaiMat}.
\begin{theorem}
\label{HMthm}
Assume the noise term fulfills \eqref{G-hairer}, where 
$\mathcal{Z}_0$ is a symmetric finite dimensional set such that
\begin{itemize}
    \item [i)] there exist at least two elements in $\mathcal{Z}_0$ with different Euclidean norms,
    \item [ii)] integer linear combinations of elements of $\mathcal{Z}_0$ generate $\mathbb{Z}^2$.
\end{itemize}
Then, the Navier-Stokes equation in the vorticity formulation \eqref{NS-vort} has a unique invariant measure.
\end{theorem}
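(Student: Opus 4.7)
The plan is to apply Theorem \ref{HaiMat}: it suffices to verify that the Markov semigroup $\{P_t\}_{t\ge 0}$ associated to \eqref{NS-vort} on the natural phase space of mean-zero vorticities is Feller, weakly topologically irreducible, and asymptotically strong Feller. The Feller property is standard and follows from continuous dependence on the initial condition, a byproduct of the energy-type estimates underlying Proposition \ref{prop_uniq_sol} (the additive structure of the noise here makes the argument more straightforward).

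For weak topological irreducibility I would take $z=0$ for every pair $(x,y)$. The dissipative structure of \eqref{NS-vort} makes the deterministic uncontrolled flow contract towards $0$, while the additive noise $\tilde G\,{\rm d}W$ admits a Stroock--Varadhan-type support theorem on the finite-dimensional subspace spanned by $\{h_{\mathbf{k}}:\mathbf{k}\in\mathcal{Z}_0\}$. Condition (ii) then lets us build deterministic controls of finite duration that, combined with dissipation on the high modes, steer any initial datum into an arbitrary neighborhood of $0$ with positive probability, yielding the common reachable point.

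The main obstacle is the asymptotic strong Feller property, and I would follow the Malliavin calculus strategy of Hairer and Mattingly. Given a direction $\xi$, let $J_{0,t}\xi$ denote the derivative of the solution with respect to the initial condition. Writing $J_{0,t}\xi=\bigl(J_{0,t}\xi-\mathcal{D}v\bigr)+\mathcal{D}v$, where $\mathcal{D}v$ is the Malliavin derivative along a Cameron--Martin shift $v$, I would choose $v$ so that $P_N(J_{0,t}\xi-\mathcal{D}v)$ is driven to zero by inverting the reduced Malliavin matrix on the low modes, while on the high modes the intrinsic dissipation of the vorticity equation contracts the residual. Applying the Malliavin integration-by-parts formula then decomposes $\nabla(P_t\varphi)(x)\xi$ into a piece bounded by $\|\varphi\|_\infty\,\mathbb{E}[\|v\|_{L^2}]$ and a piece bounded by $\|\nabla\varphi\|_\infty\,\mathbb{E}\|J_{0,t}\xi-\mathcal{D}v\|$, giving exactly the required inequality with a prefactor $\delta_n\to 0$ along a suitable sequence $t_n\to\infty$.

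The technical heart, and the step I expect to be hardest, is showing that the reduced Malliavin covariance matrix on $P_N\tilde H$ is almost surely invertible with sufficiently strong negative-moment bounds; this is where hypotheses (i) and (ii) play their decisive role. The directly forced Fourier modes in $\mathcal{Z}_0$ are transmitted to the whole lattice through iterated Lie brackets of the bilinear drift $\tilde B$ with the constant vector fields $\tilde G h_{\mathbf{k}}$: condition (ii) guarantees that these brackets eventually span every Fourier direction, while condition (i) rules out the algebraic cancellations inherent to the torus geometry that would make the bracket coefficients vanish. Running the resulting Hörmander-type argument quantitatively in infinite dimensions, carefully tracking how many brackets are needed for a given $N$ and obtaining tail estimates on the inverse of the truncated Malliavin matrix, is the demanding part that closes the proof.
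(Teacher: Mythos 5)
Your proposal is correct and follows exactly the route the paper indicates: this theorem is quoted from Hairer--Mattingly and is proved by verifying the hypotheses of Theorem \ref{HaiMat} (Feller, weak topological irreducibility via dissipation plus a control argument toward $0$, and asymptotic strong Feller via the Malliavin-calculus/approximate-integration-by-parts decomposition of $J_{0,t}\xi$), with conditions i) and ii) entering precisely through the H\"ormander-type bracket condition that lets the bilinear term propagate the noise from $\mathcal{Z}_0$ to all of $\mathbb{Z}^2$. The survey gives no further detail than this, and your sketch faithfully reproduces the strategy of the cited proof.
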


For instance, Theorem \ref{HMthm} holds for 
 $\mathcal{Z}_0=\{(1,0), (-1,0), (1,1), (-1,-1)\}$. 
 Notice that Theorem \ref{HMthm} gives a minimal nondegeneracy condition, independent of the viscosity $\nu$ and the intensity of the noise.

\begin{remark}
    We are not aware of results considering a multiplicative noise satisfying Assumptions \ref{noise1}-\ref{noise2} in the hypoelliptic setting.
\end{remark}

\section{Asymptotic stability of the invariant measure}
\label{sec_asy_sta}

Once the uniqueness of the invariant measure is proved, the natural question is whether this measure represents the statistical equilibrium of the system, that is if the law of the solution process asymptotically converges to it whatever is the initial velocity $u_0$.  In the deterministic setting, i.e. when $G=0$, for large enough viscosity  there is convergence with exponential decay (see, e.g., \cite[Theorem 10.2]{Temam1983}). In the stochastic setting,  there is a vast  literature concerning results of asymptotic stability of the invariant measure  in the case of an additive noise; however,  in the case of a multiplicative noise there are very few results. The only one dealing with the case of a not bounded multiplicative noise satisfying Assumptions \ref{noise1}-\ref{noise2} is our previous paper \cite{FerZanSNS}. 
There, we appeal to the generalized couplings techniques developed in \cite{KS} to infer the asymptotic stability of the unique invariant measure. 

The results obtained in \cite{KS} are a refinement of the results in \cite{GHMR17}; thus the framework is the one considered in Section \ref{effect} (effectively elliptic setting). 

\begin{theorem}
\label{KS}
Suppose that  the transition semigroup  associated to \eqref{sns} is a Feller semigroup on $H$ and for any $x, \tilde x \in H$ there exists some $\xi:=\xi_{x, \tilde x} \in \widehat{\mathcal{C}}(\mathbb{P}_{x}, \mathbb{P}_{\tilde x})$ such that $\pi_1(\xi)\sim \mathbb{P}_{x}$ and, for any $\varepsilon>0$,
\begin{equation}
\label{hyp_KS}
\lim_{n \rightarrow \infty} \xi \left((u, \tilde u) \in H^\mathbb{N} \times H^{\mathbb{N}} \ : \ 
\|u(n) - \tilde u(n)\|_H\le \varepsilon\right)=1.
\end{equation}
Then, there exists at most one invariant probability measure $\mu$ and
\begin{equation*}
{\cal L}(u(n;u_0)) 
\rightharpoonup \mu \quad \text{as} \ n \rightarrow \infty, \quad \forall \ x \in H.
\end{equation*}

%
\end{theorem}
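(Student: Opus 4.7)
The plan is to deduce both claims from the abstract asymptotic coupling theory of \cite{KS}, whose hypotheses are essentially those stated in the theorem, and to leverage the previously-recalled Theorem \ref{GHMRthm} for the uniqueness part.

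For the uniqueness claim, I would first observe that hypothesis \eqref{hyp_KS} is strictly stronger than condition \eqref{cond_GHMR} of Theorem \ref{GHMRthm}. Indeed, convergence of $\|u(n)-\tilde u(n)\|_H$ to zero in $\xi$-probability can be upgraded, via a Borel--Cantelli argument along a sparse subsequence $\{n_k\}$ chosen so that $\xi(\|u(n_k)-\tilde u(n_k)\|_H > 2^{-k}) \le 2^{-k}$, to almost sure convergence along $\{n_k\}$. Since the Markov chain subsampled at these times shares the same invariant probability measures as the original one (Feller property plus the fact that invariance for all $P_t$ is equivalent to invariance at a single time for ergodic chains), Theorem \ref{GHMRthm} yields at most one ergodic invariant measure, and uniqueness of the invariant measure itself follows from the Choquet-type ergodic decomposition of invariant measures on Polish spaces.

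For the convergence of marginals, let $\mu$ denote the invariant measure, whose existence is guaranteed by Remark \ref{star}. Fix $u_0 \in H$ and a bounded Lipschitz test function $\phi: H \to \mathbb{R}$. By invariance,
\[
\int_H \phi\, \mathrm{d}\mu = \int_H \mathbb{E}\phi(u(n;y))\, \mathrm{d}\mu(y),
\]
so it suffices to show that $\int_H |\mathbb{E}\phi(u(n;u_0)) - \mathbb{E}\phi(u(n;y))|\, \mathrm{d}\mu(y) \to 0$. For each $y$, the coupling $\xi_{u_0,y}$ produces two processes whose marginal laws are, respectively, equivalent to $\mathbb{P}_{u_0}$ and absolutely continuous with respect to $\mathbb{P}_{y}$, and which satisfy $\|u(n)-\tilde u(n)\|_H \to 0$ in $\xi$-probability. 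The key new ingredient relative to Theorem \ref{GHMRthm} is the equivalence $\pi_1(\xi)\sim \mathbb{P}_{u_0}$: since equivalent laws share null sets, this lets one transfer the coupling convergence to a genuine comparison between $\mathcal{L}(u(n;u_0))$ and $\mathcal{L}(u(n;y))$, rather than merely asserting existence of \emph{some} coupled trajectory along which the distance shrinks.

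The main obstacle is to make this comparison uniform enough in $y$ to survive integration against $\mu(\mathrm{d}y)$. The Radon--Nikodym densities $\mathrm{d}\pi_1(\xi_{u_0,y})/\mathrm{d}\mathbb{P}_{u_0}$ and $\mathrm{d}\pi_2(\xi_{u_0,y})/\mathrm{d}\mathbb{P}_{y}$ arise from the Girsanov shift \eqref{sigma} used to construct the nudged dynamics \eqref{snsnud} and depend non-trivially on the pair $(u_0, y)$, so a naive pointwise estimate does not integrate. The strategy of \cite{KS} is to work with a bounded-Lipschitz metric and to split the $\mu$-integral according to a compact piece (on which the Foias--Prodi decay \eqref{FP}, the Feller property, and the moment bounds of Section \ref{S:ex-inv-meas} yield uniform control on the coupling), and a tail piece made negligible by tightness of $\mu$, which is supported in $V$ by the energy estimates. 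Overcoming this uniformity issue is exactly the technical content that distinguishes Theorem \ref{KS} from Theorem \ref{GHMRthm} and is where the strengthened hypotheses \eqref{hyp_KS} and $\pi_1(\xi)\sim\mathbb{P}_x$ are used.
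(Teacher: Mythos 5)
The paper does not actually prove Theorem \ref{KS}: it is quoted from Kulik and Scheutzow \cite{KS}, so there is no in-paper argument to compare against, and the relevant benchmark is the abstract, purely measure-theoretic proof in \cite{KS}. Measured against that, your proposal has two genuine gaps. First, the uniqueness step: hypothesis \eqref{hyp_KS} is convergence of $\|u(n)-\tilde u(n)\|_H$ to $0$ \emph{in probability} under $\xi$, which does not imply that the event in \eqref{cond_GHMR} (convergence along the full sequence) has positive $\xi$-probability, and your Borel--Cantelli repair only produces almost sure convergence along a sparse subsequence $\{n_k\}$. Theorem \ref{GHMRthm} is stated for a time-homogeneous Markov chain and for the limit over \emph{all} $n$; the subsampled process $(u(n_k))_k$ has step-dependent transition kernels $P^{n_{k+1}-n_k}$ with necessarily growing gaps, so it is not covered by that theorem, and asserting that it ``shares the same invariant measures'' does not close this. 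The detour is also unnecessary: once $\mathcal{L}(u(n;u_0))\rightharpoonup\mu$ is established for every invariant $\mu$ and every $u_0$, uniqueness follows at once from uniqueness of weak limits.

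Second, and more importantly, the convergence statement --- the actual content of the theorem --- is not proved. The obstruction is not primarily uniformity in $y$: even for a single pair $(u_0,y)$, the coupling controls $\|u(n)-\tilde u(n)\|_H$ under $\xi$, whose marginals are only equivalent to $\mathbb{P}_{u_0}$, respectively absolutely continuous with respect to $\mathbb{P}_y$. Absolute continuity is qualitative and by itself gives no bound on the difference between $\mathbb{E}\phi(u(n;u_0))$ and the corresponding expectation under $\pi_1(\xi)$; converting ``the coupled trajectories become close in probability'' into ``the true transition probabilities become close in a metric for weak convergence'' is precisely the hard step. In \cite{KS} this is done abstractly, by combining the Markov property with the fact that, for an $L^1$ Radon--Nikodym density, events whose probability tends to $1$ under one measure have probability tending to $1$ under the other --- this is exactly where the strengthened hypothesis $\pi_1(\xi)\sim\mathbb{P}_x$ (rather than mere absolute continuity) is consumed. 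Your sketch instead defers to problem-specific ingredients (the Girsanov shift \eqref{sigma}, the Foias--Prodi estimates, the support of $\mu$ in $V$), which belong to the \emph{verification of the hypotheses} of Theorem \ref{KS} for the Navier--Stokes application (i.e.\ to Theorem \ref{asy_thm}), not to the proof of the abstract theorem itself; and the proposed compact/tail splitting of the $\mu$-integral addresses only uniformity in $y$, not the pointwise comparison just described.
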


Theorem \ref{KS} have been used in \cite{KS} to infer the (uniqueness  and) asymptotic stability of the invariant measure for system \eqref{sns} driven by an additive noise, working under the same non-degeneracy assumptions of Theorem \ref{GHMinv}. In \cite{FerZanSNS} we extend the result to the case of a multiplicative noise.
\begin{theorem}
\label{asy_thm}
Under the same Assumptions of Theorem \ref{unique_thm}, \eqref{sns} possesses at most one ergodic invariant measure $\mu\in \mathcal{P}(H)$ and
\begin{equation*}
{\cal L}(u(n;u_0))  \rightharpoonup \mu \quad \forall \ u_0 \in H.
\end{equation*}
\end{theorem}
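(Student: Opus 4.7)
The plan is to apply Theorem \ref{KS}, which refines the generalized coupling criterion of Theorem \ref{GHMRthm} precisely so as to upgrade ``uniqueness of an invariant measure'' to ``asymptotic stability of the Markov chain at integer times.'' Two things must be verified: (a) the Markov semigroup $\{P_t\}_{t\ge 0}$ associated to \eqref{sns} is Feller on $H$, and (b) for every pair $u_0,v_0\in H$ there exists a generalized coupling $\xi_{u_0,v_0}\in\widehat{\mathcal{C}}(\mathbb{P}_{u_0},\mathbb{P}_{v_0})$ with $\pi_1(\xi_{u_0,v_0})\sim\mathbb{P}_{u_0}$ and such that, for every $\varepsilon>0$,
\[
\lim_{n\to\infty}\xi_{u_0,v_0}\bigl(\{(u,\tilde u)\in H^{\mathbb{N}}\times H^{\mathbb{N}}:\|u(n)-\tilde u(n)\|_H\le\varepsilon\}\bigr)=1.
\]
Point (a) is routine and follows from the Lipschitz-type continuous dependence of the strong solution of \eqref{sns} on its initial datum, as already invoked in Section \ref{S:ex-inv-meas}.

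For point (b), I would recycle verbatim the construction in the proof of Theorem \ref{unique_thm}: take $v=v(v_0,u_0)$ to be the solution of the nudged equation \eqref{snsnud} with $N\ge\bar N$ given by Lemma \ref{FPlem}, and let $\xi_{u_0,v_0}$ be the joint law of $(u(n),v(n))_{n\in\mathbb{N}}$ on $H^{\mathbb{N}}\times H^{\mathbb{N}}$. By construction $\pi_1(\xi_{u_0,v_0})$ is literally equal to $\mathbb{P}_{u_0}$, so the required equivalence on the first marginal is automatic. Meanwhile, the Girsanov transformation implemented via the shift \eqref{sigma}---admissible thanks to Assumption \ref{noise3} with $M\ge N$ and to the Foias--Prodi bound \eqref{FP}---gives $\pi_2(\xi_{u_0,v_0})\ll\mathbb{P}_{v_0}$, so that $\xi_{u_0,v_0}\in\widehat{\mathcal{C}}(\mathbb{P}_{u_0},\mathbb{P}_{v_0})$.

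The only genuinely new step is to strengthen the ``positive synchronization probability'' used in the proof of Theorem \ref{unique_thm} to ``synchronization in probability.'' This is where the expected-value form of Lemma \ref{FPlem} pays off: by Markov's inequality,
\[
\mathbb{P}\bigl(\|u(n)-v(n)\|_H>\varepsilon\bigr)\le\frac{\mathbb{E}\|u(n)-v(n)\|_H^2}{\varepsilon^2}\le\frac{C}{\varepsilon^2\,n^p}\longrightarrow 0\quad\text{as }n\to\infty,
\]
for any $p\in\bigl(0,\tfrac{\nu\lambda_1}{2C_1}-\tfrac{3}{8}\bigr)$. Rewriting this in terms of $\xi_{u_0,v_0}$ yields exactly the limit required in (b), and Theorem \ref{KS} then delivers both the uniqueness of an ergodic invariant measure $\mu$ and the weak convergence $\mathcal{L}(u(n;u_0))\rightharpoonup\mu$ for every $u_0\in H$.

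The main technical obstacle---already confronted in the proof of Theorem \ref{unique_thm}---is really the Girsanov step, since one must verify that the finite-dimensional shift \eqref{sigma} is admissible despite the multiplicative noise having linear growth; this is what forces the condition $\nu>11C_1/(4\lambda_1)$ (or the weaker $\nu>3C_1/(4\lambda_1)$ in the localized variant). Once Lemma \ref{FPlem} and the Girsanov setup of Theorem \ref{unique_thm} are available, the passage from uniqueness to asymptotic stability is essentially the one-line Markov-inequality upgrade displayed above, replacing the Borel--Cantelli argument that yielded $\mathbb{P}(B^c)>1/2$ by a strictly stronger but in fact easier conclusion.
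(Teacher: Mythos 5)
Your proposal is correct and follows essentially the same route as the paper: both construct $\xi_{u_0,v_0}$ as the joint law of the original and nudged solutions at integer times, use Girsanov (via Assumption \ref{noise3} and Lemma \ref{FPlem}) for $\pi_2(\xi_{u_0,v_0})\ll\mathbb{P}_{v_0}$, and upgrade the synchronization to convergence in probability by a Markov/Chebyshev inequality applied to the Foias--Prodi estimate before invoking Theorem \ref{KS}. The only cosmetic difference is that you bound $\mathbb{P}(\|u(n)-v(n)\|_H>\varepsilon)$ by $C/(\varepsilon^2 n^p)$ while the paper bounds $\mathbb{P}(\|u(n)-v(n)\|_H^2>\varepsilon)$ by $C/(\varepsilon n^p)$, which is immaterial.
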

The proof of the above result is similar to the proof of Theorem \ref{GHMinv} and relies on the Foias-Prodi estimates of Theorem \ref{FPlem}. For any $u_0, v_0 \in H$ we construct the measure $\xi_{u_0,v_0}$ on $H^{\mathbb{N}} \times H^{\mathbb{N}}$ given by the law of $(u(n), v(n))_{n \in \mathbb{N}}$, where $u$ solves \eqref{sns} with corresponding initial datum $u_0$ and $v$ solves \eqref{snsnud} with corresponding initial datum $v_0$, where we consider $N \ge \bar N$ with $\bar N$ as in Lemma \ref{FPlem}. As showed in Subsection \ref{effect}, provided Assumption \ref{noise3} holds with $M \ge N$, $\pi_2(\xi_{u_0,v_0}) \ll \mathbb{P}_{v_0}$. Moreover, $\pi_1(\xi_{u_0,v_0})= \mathbb{P}_{u_0}$. Exploiting the Foias-Prodi estimates, for any $\varepsilon>0$, we infer 
\begin{equation*}
\mathbb{P}\left(\|u(n)-v(n)\|^2_H >\varepsilon \right) \le \frac{1}{\varepsilon}\mathbb{E}\left[ \|u(t)-v(t)\|^2_H \right] \le \frac{1}{\varepsilon}\dfrac{C}{n^p},  \quad \forall  \ p \in  \left(0,\frac{\nu \lambda_1}{ 2 C_1}-\frac 38\right),
\end{equation*}
with $C$ a positive constants depending on the parameters of the equations. 
Therefore, 
\begin{equation}
\lim_{n \rightarrow \infty} \xi \left((u, v) \in H^\mathbb{N} \times H^{\mathbb{N}} \ : \ 
\|u(n) - v(n)\|_H\le \varepsilon\right) = \lim_{n \rightarrow \infty}\mathbb{P}\left(\|u(n)-v(n)\|^2_H \le\varepsilon \right) =1
\end{equation}
and this concludes the proof of the asymptotic stability of the unique invariant measure by appealing to Theorem \ref{KS}.

 \begin{remark}
In \cite{FerZanSNS} we prove an analogous version of Theorem \ref{asy_thm} in the case of a noise which is bounded or satisfies a sublinear growth condition. 
In these cases the result of Theorem \ref{asy_thm} holds for any $\nu>0$. The case of a bounded multiplicative noise is treated also in \cite{Oda2008}, always in an effectively elliptic setting. There, by means of coupling techniques, is also proved an exponential mixing result; see also \cite{Mat02b}.
\end{remark}

\begin{remark}
For the case of an additive noise there are plenty of works concerning the asymptotic stability of the unique invariant measure. The majority of the works also study the speed of convergence. For instance, we mention \cite{GolMas05} for the elliptic setting,
\cite{KS02}, \cite{KS}, \cite{BKS} for the effectively elliptic setting,  \cite{HaiMat2} and \cite{HaiMat3} for the hypoelliptic setting. 
\end{remark}


\section*{Acknowledgements}

The authors are members of Gruppo Nazionale per l’Analisi Matematica, la Probabilità e le loro Applicazioni (GNAMPA) of the Istituto Nazionale di Alta Matematica (INdAM). M.Z. gratefully acknowledges financial support through the project CUP-E53C23001670001.

M.Z. gratefully acknowledges the financial support of the project  “Prot. P2022TX4FE\_02 -  “Stochastic particle-based anomalous reaction-diffusion models with heterogeneous interaction for radiation therapy"  financed by the European Union - Next Generation EU, Missione 4-Componente 1-CUP: D53D23018970001.

B.F. gratefully acknowledges the financial support of the 
CAMRisk (Centre for the Analysis and Measurement of Global Risks)
at the Department of Economics and Management, University of Pavia.


\end{document}